\definecolor{lightgray}{gray}{0.75}
\newcommand\greybox[1]{%
  \vskip\baselineskip%
  \par\noindent\colorbox{lightgray}{%
    \begin{minipage}{\textwidth}#1\end{minipage}%
  }%
  \vskip\baselineskip%
}
\providecommand{\norm}[1]{\lVert#1\rVert}
\definecolor{refkey}{RGB}{255,0,0}
\definecolor{labelkey}{RGB}{255,0,0}
\newcommand{\inputdir}{.}
\newtheorem{thm}{Theorem}
\newtheorem{cor}[thm]{Corollary}
\newtheorem{prop}[thm]{Proposition}
\newtheorem{lem}[thm]{Lemma}
\newtheorem{dfn}[thm]{Definition}
\newcommand{\BigO}[1]{\mathcal{O}\left(#1\right)}
\begin{document}

\title{The multisymplectic diamond scheme}
\author{R I McLachlan%
\thanks{Institute of Fundamental Sciences, Massey University,
   Private Bag 11 222, Palmerston North 4442, New Zealand}
   \and M C Wilkins$^*$}

\maketitle

\begin{abstract}\noindent
We introduce a class of general purpose linear multisymplectic integrators for Hamiltonian wave equations based on a diamond-shaped mesh. On each diamond, the PDE is discretized by a symplectic Runge--Kutta method. The scheme advances in time by filling in each diamond locally, leading to greater efficiency and parallelization and easier treatment of boundary conditions compared to methods based on rectangular meshes. 
\end{abstract}

\pagestyle{myheadings}
\thispagestyle{plain}
\markboth{R I MCLACHLAN AND M C WILKINS}{The multisymplectic diamond scheme}

\section{Introduction}
In this paper we consider multisymplectic integrators for the
the multi-Hamiltonian PDE
\begin{equation}
   K\mathbf{z}_t + L\mathbf{z}_x = \nabla S(\mathbf{z}), \label{eqn:hampde}
\end{equation}
where $K$ and $L$ are constant $n \times n$ real skew-symmetric matrices,
$\mathbf{z}\colon\Omega\to \mathbb{R}^n$, $\Omega\subset\mathbb{R}^2$, and $S\colon \mathbb{R}^n
\to\mathbb{R}$.  
By introducing $\mathbf{z} = (u, v, w)$, $v = u_t$ and $w = u_x$, the one-dimensional wave
equation, $u_{tt} - u_{xx} = f(u)$, can be written in this
form with
\begin{gather} \label{eqn:1dwaveeqn}
   K = \begin{pmatrix} 0 & -1 & 0\\1 & 0 & 0\\0 & 0 & 0 \end{pmatrix},
   \quad L = \begin{pmatrix} 0 & 0 & 1\\0 & 0 & 0\\-1 & 0 & 0
   \end{pmatrix}, \\ \quad S(\mathbf{z}) = -V(u)+\frac{1}{2}v^2 - \frac{1}{2}w^2, \;\mathrm{and}\;
   f(u)=-V'(u).
\end{gather} 
Many variational PDEs can be written in the canonical form~\eqref{eqn:hampde}, including the
Schr\"{o}dinger, Korteweg--de Vries, and Maxwell equations.
Any  solutions to~\eqref{eqn:hampde} satisfy the \emph{multisymplectic conservation law}
\begin{equation}
   \omega_t + \kappa_x = 0,    \label{eqn:symlaw}
\end{equation}
where $\omega = \frac{1}{2} (d\mathbf{z} \wedge K d\mathbf{z})$ and
$\kappa = \frac{1}{2} (d\mathbf{z} \wedge L d\mathbf{z})$~\cite[pg.~338]{simhamdyn}.
A numerical method that satisfies a discrete version of
Eq.~\eqref{eqn:symlaw} is called a \emph{multisymplectic integrator};
see \cite{simhamdyn,bridges2006numerical} for reviews of multisymplectic integration.

The (Preissman or Keller) box scheme \cite[p.~342]{simhamdyn}, a multisymplectic
integrator, is simply the implicit midpoint rule (a Runge--Kutta method)
applied in space and in time on a rectangular grid.
We call it the {\em simple box scheme} to distinguish it from 
other Runge--Kutta box-based schemes.
There are plenty of multisymplectic low-order methods applicable to
Schr{\"o}dinger's equation~\cite{ zhong2013new, liao2013nonstandard,
islas2001geometric, sun2003multi, hong2006globally,
chen2002symplectic}.  Most are based on box-like schemes and
are second order.  LingHua's~\cite{LingHua2013} method for the Klein-Gordon-Schr{\"o}dinger equation
has spectral
accuracy in space and is second order in time.
Jia-Xiang~\cite{jia2013multisymplectic} presents a multisymplectic,
low order, implicit/explicit method  for the {K}lein-{G}ordon-{S}chr{\"o}dinger equation.
Hong~\cite{hong2006multi} presents a box-like multisymplectic method for the 
nonlinear Dirac equation.  For the Korteweg-de Vries equation there
are numerous~\cite{ hong2006multi, wang2012multi,
zhao2000multisymplectic, ascher2005symplectic} multisymplectic low
order  box-like schemes. Moore~\cite{moore2003multi} gives a multisymplectic
box-like low order scheme that can be applied to any multi-Hamiltonian
system.
Bridges and Reich~\cite{Bridges2001184} present a staggered-grid
multisymplectic method that is based on the symplectic
St{\"o}rmer-Verlet scheme.  They discuss possible extensions to higher
order methods.
 
Instead of discretizing one particular PDE, we wish to develop methods
that are applicable to the entire class~\eqref{eqn:hampde}, specializing to a particular equation or family as late as possible. The simple box scheme is
simple to define, can in principal be applied to any PDE of the
form~\eqref{eqn:hampde} and has several appealing properties, including the unconditional preservation of dispersion relations (up to diffeomorphic remapping of continuous to 
discrete frequencies) with consequent lack of parasitic waves \cite{ascher2004multisymplectic} and preservation of the sign of group velocities \cite{frank2006linear}, and lack of spurious reflections at points where the mesh size changes \cite{frank2004on}. These properties are related
to the linearity of the box scheme which suggests that this feature should be retained. 

However, the simple box scheme also has some less positive features. It is fully implicit, which makes it expensive; for equations where the CFL condition is not too restrictive, the extra (linear and sometimes nonlinear) stability this provides is not needed. The implicit equations may not have a solution: with periodic boundary conditions, solvability requires that the number of grid points be odd \cite{ryland}; we have found no general treatment of Dirichlet, Neumann, or mixed boundary conditions in the literature that leads to a well-posed method. It is only second order in space and time.

The latter issue can be avoided by applying higher-order Runge--Kutta methods in space and in time \cite{reich}. As the dependent variables are the internal stages, typically one obtains the stage order in space, for example order $r$ for $r$-stage Gauss Runge--Kutta \cite{mclachlan2014on}. However the scheme is still fully implicit and this time leads to singular ODEs for periodic boundary conditions unless $r$ and $N$ are {\em both} odd \cite{ryland,mclachlan2014on}.

The first two issues, implicitness and boundary treatment, are related. They can be avoided for some PDEs, like the nonlinear wave equation, by applying suitably partitioned Runge--Kutta methods \cite{ryland, mclachlan2011linear, ryland2008multisymplecticity, ryland2007multisymplecticity}. When they apply, they lead to explicit ODEs amenable to explicit time-stepping, can have high order, and can deal with general boundary conditions. However, the partitioning means that they are not linear methods.

The analogy with Hamiltonian ODEs, for which explicit and implicit symplectic integrators both have their domain of applicability, is striking, and indicates that there may be multisymplectic integrators based on Runge--Kutta discretizations that respect the structure of the PDE better and lead to broader applicability. This is the case, and we introduce in this paper the class of {\em diamond schemes} for~\eqref{eqn:hampde}. It is based on the following observation. Let the 
PDE~\eqref{eqn:hampde} be discretized on a square cell by a Runge--Kutta method in space and time. To each internal point there are $n$ equations and $n$ unknowns. To each pair of opposite edge points there is one equation. Therefore, to get a closed system with the same number of equations as unknowns, data should be specified on exactly {\em half} of the edge points. We shall show later that for the nonlinear wave equation, specifying $z$ at the edge points on two adjacent edges leads to a properly determined system for the two opposite edges. What remains is to arrange the cells so that the information flow is consistent with the initial value problem.

\begin{dfn}\label{def:diamond}
 A {\em diamond scheme} for the PDE~\eqref{eqn:hampde} is a quadrilateral mesh in space-time together with a mapping of each quadrilateral  to a square to which a Runge--Kutta method is applied in each dimension, together with
initial data specified at sufficient edge points such that the solution can be propagated forward in time by locally solving for pairs of adjacent edges. 
\end{dfn}

For equations that are symmetric in $x$, the quadrilaterals are typically diamonds, and we outline the scheme first in the simplest case, the analogue of the simple box scheme that we call the {\em simple diamond scheme}.

The diamond scheme is inspired by and has some similarities with the {\em staircase method} in discrete integrability \cite{van2010staircase}. In both cases initial data is posed on a subset of a quadrilateral graph such that the remaining data can be filled in uniquely. In discrete integrability, this fill-in is usually explicit, whereas for the diamond schemes  it depends on the PDE and is usually implicit. A second key point is that in the diamond method, there is a stability condition that the fill-in must be such that the numerical domain of dependence includes the analytic domain of dependence. Thus the characteristics of the PDE determine the geometry of the mesh: if they all pointed to the right then one could indeed use a simple rectangular mesh and fill in from left to right. (Indeed, this was how early versions of the box scheme proceeded.)

\section{The simple diamond scheme}

Consider solving Eq.~\eqref{eqn:hampde} numerically on the domain $x \in
\left[a,b\right]$, $t \ge 0$, with periodic boundary conditions.  Unlike a typical finite difference scheme
which uses a rectilinear grid aligned with the $(x,t)$ axes, the
simple diamond scheme uses
a mesh comprised of diamonds, see
figure~\eqref{fig:simple_diamond_mesh}.
   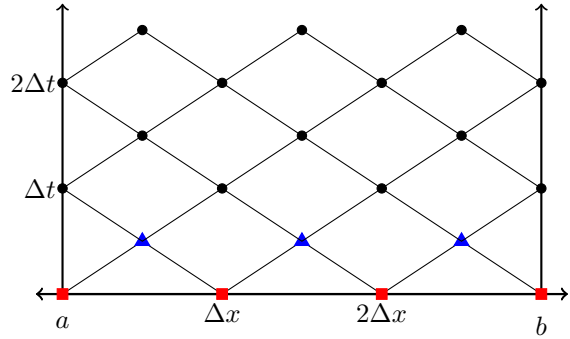
\begin{figure}
      \centering
      \begin{tikzpicture}[scale=0.35]

         \draw[<->,thick] (0, 11) -- (0, 0) -- (18, 0) -- (18, 11);
         \draw[<->,thick] (-1, 0) -- (19, 0);
         \draw[<->,thick] (-1, 0) -- (19, 0);
         
         \foreach \x in {0cm, 6cm, 12cm} {
            \draw [xshift=\x] (0, 0) -- (3, 2) -- (6, 0);
         }
         \node[color=red] at (0,0) {\pgfuseplotmark{square*}};    
         \node[color=red] at (6,0) {\pgfuseplotmark{square*}};
         \node[color=red] at (12,0) {\pgfuseplotmark{square*}};
         \node[color=red] at (18,0) {\pgfuseplotmark{square*}};
         \node[color=blue,scale=1.5] at (3,2) {\pgfuseplotmark{triangle*}};
         \node[color=blue,scale=1.5] at (9,2) {\pgfuseplotmark{triangle*}};
         \node[color=blue,scale=1.5] at (15,2) {\pgfuseplotmark{triangle*}};

         \foreach \y in {0cm, 4cm} {
            \begin{scope}[yshift=\y]
               \foreach \x in {0cm, 6cm, 12cm} {
                  \draw [xshift=\x] (0, 4) -- (3, 2) -- (6, 4);
                  \filldraw [xshift=\x] (0, 4) circle (5pt) (3, 6) circle (5pt) ;
               }
               \filldraw (18, 4) circle (5pt);
               \foreach \x in {0cm, 6cm, 12cm} {
                  \draw [xshift=\x,yshift=4cm] (0, 0) -- (3, 2) -- (6, 0);
               }
            \end{scope}
         }
         
         \draw (-3pt,4) -- (3pt,4) node[left] {$\Delta t$};
         \draw (-3pt,8) -- (3pt,8) node[left] {$2 \Delta t$};
         \draw (6, 0) node[below] {$\Delta x$};
         \draw (12, 0) node[below] {$2 \Delta x$};

         \draw (0, -0.5) node[below] {$a$};
         \draw (18, -0.5) node[below] {$b$};
        
      \end{tikzpicture}

      \caption{The domain divided into diamonds by the simple diamond
      method.  The solution, $\mathbf{z}$, is calculated at the corners of the
      diamonds.  The scheme is started using the initial
      condition, which gives the solution along the $x$ axis at the
      red squares, and
      the solution at $t = \frac{\Delta t}{2}$ (the blue triangles) which
      is calculated using a forward Euler step.  After this
      initialization the simple diamond scheme proceeds, step by step,
      to update the top of a diamond using the other three known
      points in that diamond.}
      \label{fig:simple_diamond_mesh}
   \end{figure}

To describe the simple diamond scheme consider a more detailed view of
a single diamond in figure~\eqref{fig:diamond_single}:
$\mathbf{z}_{0}^{1}$ is the solution at the top,
$\mathbf{z}_{1}^{0}$ the right most point,
$\mathbf{z}_{0}^{-1}$ the bottom,
and $\mathbf{z}_{-1}^{0}$ the left.
The point in the centre of the diamond, $\mathbf{z}_{0}^{0}$, is
defined as the average of the corner values.
   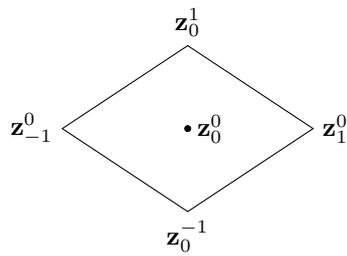
\begin{figure}
      \centering
      \begin{tikzpicture}[scale=0.55]
         \draw (-3, 0) node[left] {$\mathbf{z}_{-1}^0$} -- (0, -2) node[below] {$\mathbf{z}_{0}^{-1}$} -- (3, 0) node[right] {$\mathbf{z}_{1}^0$} -- (0, 2) node[above] {$\mathbf{z}_{0}^1$} -- cycle;
         \filldraw (0, 0) circle (2pt) node[right] {$\mathbf{z}_{0}^0$};
      \end{tikzpicture}

      \caption{A single diamond in the simple diamond scheme.  A diamond has a width of $\Delta x$ and height of
      $\Delta t$.}
      \label{fig:diamond_single}
   \end{figure}

The discrete version of Eq.~\eqref{eqn:hampde} is
\begin{gather}
   K \left( \frac{\mathbf{z}_{0}^{1} - \mathbf{z}_{0}^{-1}}{\Delta t} \right) +
   L \left( \frac{\mathbf{z}_{1}^{0} - \mathbf{z}_{-1}^{0}}{\Delta x}
   \right) = \nabla S(\mathbf{z}_{0}^{0}), \label{eqn:discretehampde} \\
   \mathbf{z}_{0}^{0} = \frac{1}{4} \left(\mathbf{z}_{0}^{1} + \mathbf{z}_{1}^{0}+ \mathbf{z}_{0}^{-1} + \mathbf{z}_{-1}^{0} \right). \label{eqn:centrept}
\end{gather}
The values $\mathbf{z}_{1}^{0}$, $\mathbf{z}_{0}^{-1}$, and $\mathbf{z}_{-1}^{0}$ are
known from the preceding step, and $\mathbf{z}_0^0$ is determined from~\eqref{eqn:centrept}, leaving
the $n$ unknowns 
$\mathbf{z}_{0}^{1}$ to be determined from the $n$ equations~\eqref{eqn:discretehampde}.  
The simple diamond scheme
solves this system of equations independently for each diamond at each time step,
then advances to the next step.   To determine the local truncation error of this scheme, substitute the
exact solution $\mathbf{z}(x + i \frac{\Delta x}{2}, t+ j \frac{\Delta t}{2})$ for $\mathbf{z}_i^j$ into Eq.~\eqref{eqn:discretehampde}
and expand in Taylor series:
\begin{gather}
   K\left(\mathbf{z}_t + \frac{\Delta t^2}{4} \mathbf{z}_{ttt} + \BigO{\Delta t^3}\right) + L\left(\mathbf{z}_x + \frac{\Delta x^2}{4} \mathbf{z}_{xxx} + \BigO{\Delta x^3}\right) = \nabla S(\mathbf{z}) \\
   \Rightarrow K\mathbf{z}_t + L\mathbf{z}_x = \nabla S(\mathbf{z}) + \BigO{\Delta t^2 + \Delta x^2};
\end{gather}
thus the order is $\BigO{\Delta t^2 + \Delta x^2}$.

For the one-dimensional wave equation defined by
Eqs.~\eqref{eqn:hampde} and~\eqref{eqn:1dwaveeqn}
the simple diamond scheme becomes
\begin{align}
\frac{{u_{1}^{0}-u_{-1}^{0}}}{\Delta x} &=\frac{{w_{0}^{-1}+w_{1}^{0}+w_{-1}^{0}+w_{0}^{1}}}{4}\label{eqn:wave2discrete},\\
\frac{u_{0}^{1}-u_{0}^{-1}}{\Delta t} & =\frac{v_{0}^{-1}+v_{1}^{0}+v_{-1}^{0}+v_{0}^{1}}{4}\label{eqn:wave1discrete},\\
\frac{v_{0}^{1}-v_{0}^{-1}}{\Delta t} & =\frac{w_{1}^{0}-w_{-1}^{0}}{\Delta x} + f(u_{0}^{0})\label{eqn:wave3discrete} .
\end{align}
At each time step, for each diamond, Eq.~\eqref{eqn:wave2discrete} is first solved
to give the new $w_{0}^{1}$.  Then Eq.~\eqref{eqn:wave1discrete} is solved for
$v_0^1$ and substituted into~\eqref{eqn:wave3discrete} to give a scalar equation of the form
\begin{equation}
\label{eqn:simplediamondsolve}
u_0^1 = C + \frac{(\Delta t)^2}{4} f(u_0^0)
\end{equation}
for $u_0^1$ where $C$ depends on the known data. This equation has a solution $u_0^1 = C +\BigO{(\Delta t)^2}$ for sufficiently small $\Delta t$ when $f$ is Lipschitz.
Thus, although the scheme is implicit, it is only {\em locally} implicit within each cell; a set of $N$ uncoupled scalar equations is typically much easier to solve than a system of $N$ coupled equations.

\begin{prop} \label{prop:sds_conservation_law}
   The simple diamond scheme shown in
   Eq.~\eqref{eqn:discretehampde} satisfies the discrete
   conservation law
\begin{multline*}
   \frac{1}{4\Delta t} \left(( d \mathbf{z}_{-1}^{0}+ d \mathbf{z}_{0}^{1} + d \mathbf{z}_{1}^{0} ) \wedge K d \mathbf{z}_{0}^{1} - ( d \mathbf{z}_{-1}^{0}+ d \mathbf{z}_{0}^{-1} + d \mathbf{z}_{1}^{0} ) \wedge K d \mathbf{z}_{0}^{-1}\right) \\
 + \frac{1}{4\Delta x} \left(( d \mathbf{z}_{0}^{1}+ d \mathbf{z}_{1}^{0} + d \mathbf{z}_{0}^{-1} ) \wedge L d \mathbf{z}_{1}^{0} - ( d \mathbf{z}_{0}^{1}+ d \mathbf{z}_{-1}^{0} + d \mathbf{z}_{0}^{-1} ) \wedge L d \mathbf{z}_{-1}^{0}\right) = 0
\end{multline*}
\end{prop}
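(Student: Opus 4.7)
The plan is to follow the standard recipe for discrete multisymplectic conservation laws: differentiate the scheme, wedge it with an appropriate 1-form, exploit the symmetry of $d^2 S$ to kill the potential, and then exploit the skew-symmetry of $K$ and $L$ to massage the resulting identity into the stated form.

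First I will take the exterior derivative of Eq.~\eqref{eqn:discretehampde}, obtaining
\begin{equation*}
K\,\frac{d\mathbf{z}_{0}^{1}-d\mathbf{z}_{0}^{-1}}{\Delta t}+L\,\frac{d\mathbf{z}_{1}^{0}-d\mathbf{z}_{-1}^{0}}{\Delta x}=d^{2}S(\mathbf{z}_{0}^{0})\,d\mathbf{z}_{0}^{0},
\end{equation*}
where $d\mathbf{z}_{0}^{0}=\frac{1}{4}(d\mathbf{z}_{0}^{1}+d\mathbf{z}_{1}^{0}+d\mathbf{z}_{0}^{-1}+d\mathbf{z}_{-1}^{0})$ by Eq.~\eqref{eqn:centrept}. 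Then I wedge on the left with $d\mathbf{z}_{0}^{0}$. Because $d^{2}S$ is symmetric, the right-hand side $d\mathbf{z}_{0}^{0}\wedge d^{2}S(\mathbf{z}_{0}^{0})\,d\mathbf{z}_{0}^{0}$ vanishes, leaving
\begin{equation*}
\frac{1}{\Delta t}\,d\mathbf{z}_{0}^{0}\wedge K(d\mathbf{z}_{0}^{1}-d\mathbf{z}_{0}^{-1})+\frac{1}{\Delta x}\,d\mathbf{z}_{0}^{0}\wedge L(d\mathbf{z}_{1}^{0}-d\mathbf{z}_{-1}^{0})=0.
\end{equation*}

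Next I substitute the expression for $d\mathbf{z}_{0}^{0}$ and expand the $K$ term as the sum of eight wedge products (and analogously for the $L$ term). Here the key algebraic fact, which I would record as a small preliminary lemma, is that if $M$ is skew-symmetric then $d\mathbf{z}_{a}\wedge M\,d\mathbf{z}_{b}=d\mathbf{z}_{b}\wedge M\,d\mathbf{z}_{a}$; in particular $d\mathbf{z}_{a}\wedge M\,d\mathbf{z}_{a}=0$ only once we remember that this ``symmetric'' 2-form is annihilated in certain cancellations (for example $d\mathbf{z}_{0}^{1}\wedge K d\mathbf{z}_{0}^{-1}$ appearing with opposite signs). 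Carrying out this simplification, the diagonal terms $d\mathbf{z}_{0}^{\pm 1}\wedge K d\mathbf{z}_{0}^{\pm 1}$ vanish and the two cross terms $\pm d\mathbf{z}_{0}^{1}\wedge K d\mathbf{z}_{0}^{-1}$ cancel, so the $K$-contribution reduces to
\begin{equation*}
(d\mathbf{z}_{1}^{0}+d\mathbf{z}_{-1}^{0})\wedge K d\mathbf{z}_{0}^{1}-(d\mathbf{z}_{1}^{0}+d\mathbf{z}_{-1}^{0})\wedge K d\mathbf{z}_{0}^{-1}.
\end{equation*}

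Finally, I re-introduce the ``free'' zero terms $d\mathbf{z}_{0}^{1}\wedge K d\mathbf{z}_{0}^{1}=0$ and $d\mathbf{z}_{0}^{-1}\wedge K d\mathbf{z}_{0}^{-1}=0$ into the two brackets respectively, bringing them to the telescoping form $(d\mathbf{z}_{-1}^{0}+d\mathbf{z}_{0}^{1}+d\mathbf{z}_{1}^{0})\wedge K d\mathbf{z}_{0}^{1}-(d\mathbf{z}_{-1}^{0}+d\mathbf{z}_{0}^{-1}+d\mathbf{z}_{1}^{0})\wedge K d\mathbf{z}_{0}^{-1}$ that appears in the proposition, and similarly for the $L$ term. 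Dividing by $4$ supplies the stated prefactors.

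The step I expect to require the most care is the bookkeeping in paragraph two: eight wedge products must be reduced using the skew-symmetry identity, and then the result must be re-expressed in an asymmetric, ``telescoping'' form by adding back zero terms in exactly the right places. Nothing deep is happening, but one sign error wastes the whole calculation, so I would do it once symbolically for $K$ and then quote the $L$ case by the obvious $t\leftrightarrow x$ symmetry.
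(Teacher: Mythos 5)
Your overall route is exactly the paper's: take the exterior derivative of Eq.~\eqref{eqn:discretehampde}, wedge on the left with $d\mathbf{z}_0^0$ from Eq.~\eqref{eqn:centrept}, kill the right-hand side by the symmetry of $d^2S$, and expand using the identity $d\mathbf{z}_a\wedge M\,d\mathbf{z}_b=d\mathbf{z}_b\wedge M\,d\mathbf{z}_a$ for skew-symmetric $M$; and the formula you end up with is the correct one. However, there is a genuine error in the middle of your computation: for a \emph{skew-symmetric} matrix $K$ the diagonal term $d\mathbf{z}_a\wedge K\,d\mathbf{z}_a$ does \emph{not} vanish --- it equals $2\cdot\tfrac12\,d\mathbf{z}_a\wedge K\,d\mathbf{z}_a$, i.e.\ twice the multisymplectic $2$-form whose conservation the proposition is about. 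The identity $d\mathbf{z}\wedge M\,d\mathbf{z}=0$ holds for \emph{symmetric} $M$ (which is precisely why the $d^2S$ term drops out); you have the two cases reversed. Consequently your claimed intermediate reduction of the $K$-contribution to $(d\mathbf{z}_1^0+d\mathbf{z}_{-1}^0)\wedge K\,d\mathbf{z}_0^1-(d\mathbf{z}_1^0+d\mathbf{z}_{-1}^0)\wedge K\,d\mathbf{z}_0^{-1}$ is a false statement: it differs from the true contribution by $d\mathbf{z}_0^1\wedge K\,d\mathbf{z}_0^1-d\mathbf{z}_0^{-1}\wedge K\,d\mathbf{z}_0^{-1}$, which is generically nonzero. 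You then reinsert exactly these quantities as ``free zero terms,'' so the two mistakes cancel and the stated identity reappears; but a chain of equalities that passes through a false assertion is not a proof as written.

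The repair is immediate and actually shortens the argument. Expanding $\tfrac14(d\mathbf{z}_{-1}^0+d\mathbf{z}_0^1+d\mathbf{z}_1^0+d\mathbf{z}_0^{-1})\wedge K(d\mathbf{z}_0^1-d\mathbf{z}_0^{-1})$, keep the diagonal terms $d\mathbf{z}_0^{\pm1}\wedge K\,d\mathbf{z}_0^{\pm1}$ --- they already appear, with the right signs, inside the two brackets of the target expression --- and observe that the only surplus terms are the cross terms $d\mathbf{z}_0^{-1}\wedge K\,d\mathbf{z}_0^1-d\mathbf{z}_0^1\wedge K\,d\mathbf{z}_0^{-1}$, which cancel by your (correct) skew-symmetry lemma. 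The $L$ term is handled identically.
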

\begin{proof}
Take the exterior derivative and apply $d \mathbf{z}_0^0 \wedge $ on the left of
Eq.~\eqref{eqn:discretehampde} to give
\begin{multline*}
   \tfrac{1}{4\Delta t}( d \mathbf{z}_{-1}^{0}+ d \mathbf{z}_{0}^{1} + d \mathbf{z}_{1}^{0} + d \mathbf{z}_{0}^{-1} ) \wedge K ( d \mathbf{z}_{0}^{1} - d \mathbf{z}_{0}^{-1} ) \\ +
   \tfrac{1}{4\Delta x}( d \mathbf{z}_{0}^{1}+ d \mathbf{z}_{1}^{0} + d \mathbf{z}_{0}^{-1} + d \mathbf{z}_{-1}^{0} ) \wedge L ( d \mathbf{z}_{1}^{0} - d \mathbf{z}_{-1}^{0} ) =0
\end{multline*}
Expanding and simplifying leads to the result.
\end{proof}

Although a recursive implementation of the simple diamond scheme does
not run particularly fast, it does clearly define the algorithm:
\greybox{%
   Let $N$ be the number of diamonds across the domain $[a,b]$, and
   \texttt{z[i,j]} approximate $z(a+i\tfrac{\Delta x}{2},
   j\tfrac{\Delta t}{2})$, where
   $i \in \{0, 1, \ldots, 2N-1\}, j \in \{0,1,  \ldots \}$.
   \\The functions \texttt{z0(x)} and $\mathtt{z0_t(x)}$ give the initial conditions.
   \\The function \texttt{fun(i, j)} defines \texttt{z[i,j]} recursively:
   \\\texttt{fun(i,j)
   \\\hspace*{3ex}if i is -1 then i = 2N-1
   \\\hspace*{3ex}if i is 2N then i = 0
   \\\hspace*{3ex}if j is 0 then
   \\\hspace*{3ex}\hspace*{3ex}return z0(a + i$\Delta$x/2)
   \\\hspace*{3ex}else if j is 1 then
   \\\hspace*{3ex}\hspace*{3ex}return z0(a + i$\Delta$x/2) + $\Delta$t/2 $\mathtt{z0_t}$(a + i$\Delta$x/2)
   \\\hspace*{3ex}else
   \\\hspace*{3ex}\hspace*{3ex}Use a numerical solver to find z such that
   \begin{multline*}
      \hspace*{3ex}\hspace*{3ex}K \left( \frac{\mathtt{z} - \mathtt{fun(i,j-2)}}{\Delta t} \right) + L \left( \frac{\mathtt{fun(i-1,j-1)} - \mathtt{fun(i-1,j-1)}}{\Delta x} \right) \\= \nabla S( \tfrac{\mathtt{z}+\mathtt{fun(i,j-2)}+\mathtt{fun(i-1,j-1)}+\mathtt{fun(i-1,j-1)}}{4})
   \end{multline*}
   \\\hspace*{3ex}\hspace*{3ex}return z
   }
}
The above algorithm can be considerably sped up by caching the
results of the calculation of \texttt{z}.

\subsection{Numerical test}
\label{sec:sinegordon}
As a test the Sine--Gordon equation, $u_{tt}-u_{xx}=-\sin(u)$ will be solved using the simple diamond scheme.
An exact solution is the so-called \emph{breather},
\begin{equation} \label{eqn:breather}
u(x,t) = 4 \arctan \left( \frac{\sin\left(\frac{t}{\sqrt{2}}\right)}{ \cosh\left(\frac{x}{\sqrt{2}}\right)} \right).
\end{equation}
The domain is taken significantly large, $\left[-30, 30\right]$, so
the solution can be assumed periodic.  The initial conditions are
calculated using the exact solution.
The error is the discrete 2-norm of $u$,
\begin{equation} \label{eqn:errornorm}
E^2 = \frac{b - a}{N} \sum_i^N \left(\tilde{u}_i - u(a + i \Delta x, T) \right)^2.
\end{equation}

Figure~\eqref{fig:error_rsimple_sinegordon_periodic_s2_internal0} shows the error
of the simple diamond scheme as $\Delta t$ is reduced while keeping the Courant
number $\frac{\Delta t}{\Delta x} = \frac{1}{2}$.  The final run time,
$T$, of the
scheme was chosen so that the coarsest run, that with the largest $\Delta t$,
ran for two steps.  It is apparent that for this problem, the method is of order 2.
\begin{figure}
   \centering
   \includegraphics[width=3in]{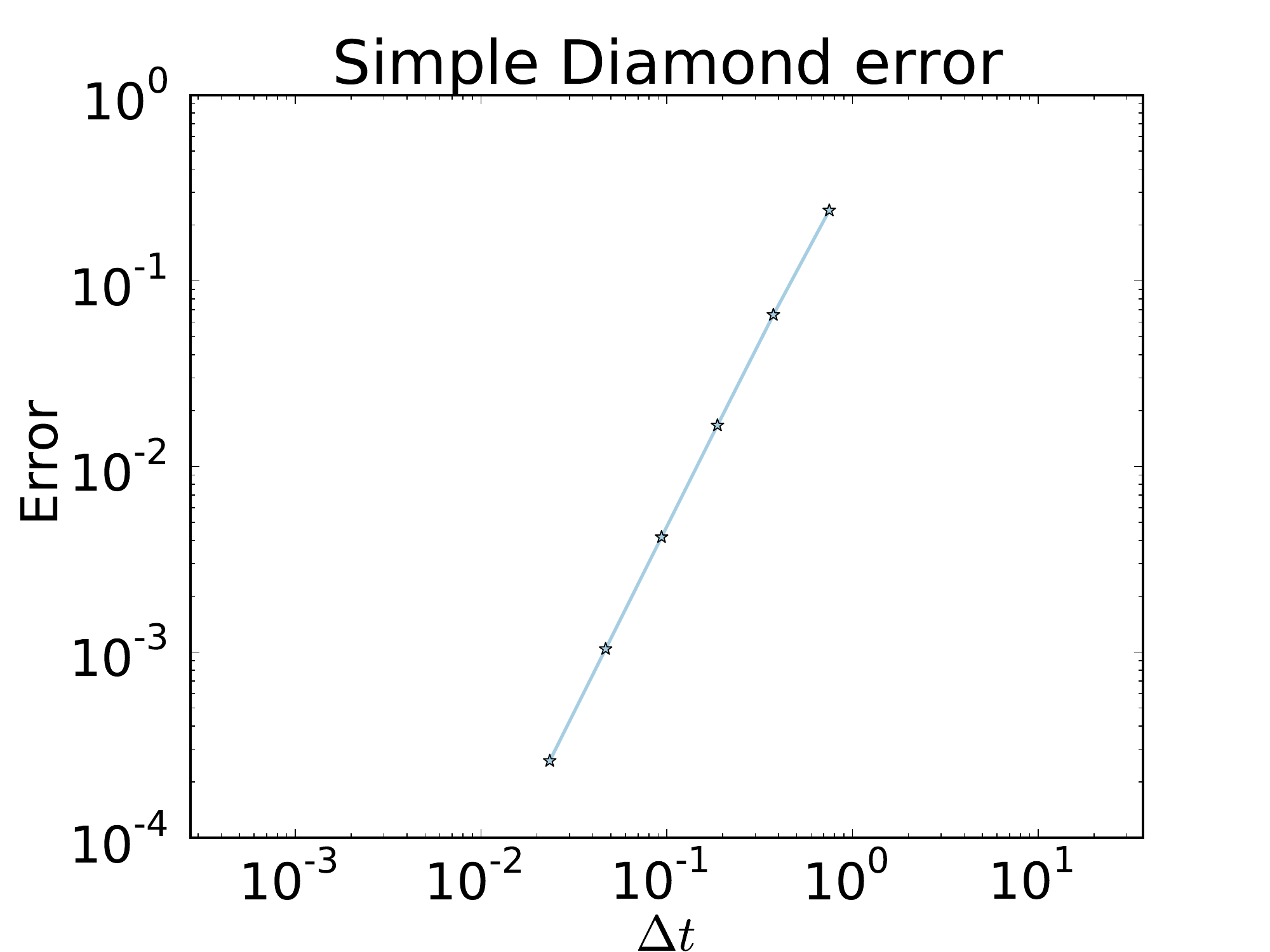}
   \input{\inputdir/error_rsimple_sinegordon_periodic_s2_internal0_caption}
   \label{fig:error_rsimple_sinegordon_periodic_s2_internal0}
\end{figure}

\section{The diamond scheme}
The diamond scheme refines the simple diamond scheme discretization by
using the multisymplectic Runge-Kutta
collocation method given by Reich~\cite{reich} within each diamond.
It is easier to apply this method to a square that is aligned with the
axes, so the first step is to transform the $(x,t)$ coordinate space.
Each diamond in figure~\eqref{fig:simple_diamond_mesh} is transformed to a square of side length one using the linear transformation $T$ defined by
\begin{equation} \label{eqn:xttilde}
T\colon\quad   \tilde{x} = \tfrac{1}{\Delta x} x + \tfrac{1}{\Delta t} t \quad \mathrm{and} \quad \tilde{t} = -\tfrac{1}{\Delta x} x + \tfrac{1}{\Delta t} t.
\end{equation}
Because Eq.~\eqref{eqn:hampde} has no dependence on $x$ or $t$ it
doesn't matter where the square is located in $(\tilde{x},\tilde{t})$
space, so the same transformation can be used for all the diamonds.
Let $\tilde{\mathbf{z}}( \tilde{x}, \tilde{t}) = \mathbf{z}(x, t)$.  By the chain rule
\begin{equation} \label{eqn:zxandzt}
   \mathbf{z}_x = \tilde{\mathbf{z}}_{\tilde{x}} \tfrac{1}{\Delta x} - \tilde{\mathbf{z}}_{\tilde{t}} \tfrac{1}{\Delta x}  \quad \mathrm{and} \quad
   \mathbf{z}_t = \tilde{\mathbf{z}}_{\tilde{x}} \tfrac{1}{\Delta t} + \tilde{\mathbf{z}}_{\tilde{t}} \tfrac{1}{\Delta t},
\end{equation}
so
\begin{align*}
   K \mathbf{z}_t + L \mathbf{z}_x &= K \left( \tilde{\mathbf{z}}_{\tilde{x}} \tfrac{1}{\Delta t} + \tilde{\mathbf{z}}_{\tilde{t}} \tfrac{1}{\Delta t} \right) + L \left( \tilde{\mathbf{z}}_{\tilde{x}} \tfrac{1}{\Delta x} - \tilde{\mathbf{z}}_{\tilde{t}} \tfrac{1}{\Delta x} \right), \\
   &= \left(\tfrac{1}{\Delta t} K -\tfrac{1}{\Delta x} L \right) \tilde{\mathbf{z}}_{\tilde{t}} + \left( \tfrac{1}{\Delta t} K + \tfrac{1}{\Delta x} L \right) \tilde{\mathbf{z}}_{\tilde{x}},
\end{align*}
thus Eq.~\eqref{eqn:hampde} transforms to
\begin{equation}
   \tilde{K} \tilde{\mathbf{z}}_{\tilde{t}} + \tilde{L} \tilde{\mathbf{z}}_{\tilde{x}} = \nabla S(\tilde{\mathbf{z}}),
\end{equation}
where
\begin{equation} \label{eqn:KLtilde}
   \tilde{K} = \tfrac{1}{\Delta t} K - \tfrac{1}{\Delta x} L \quad \mathrm{and} \quad \tilde{L} = \tfrac{1}{\Delta t} K + \tfrac{1}{\Delta x} L.
\end{equation}

\subsubsection*{Outline of method}

Figure~\eqref{fig:domain} illustrates the diamond scheme for a sample initial-boundary value problem on $[a,b]\times\mathbb{R}^+$ with periodic boundary conditions.
   \begin{figure}
      \centering
      \begin{tikzpicture}[scale=0.35]

         \draw[<->,thick] (0, 11) -- (0, 0) -- (18, 0) -- (18, 11);
         \draw[<->,thick] (-1, 0) -- (19, 0);
         \draw[<->,thick] (-1, 0) -- (19, 0);
         
         \draw[color=blue,style=dashed] (-3, 2) -- (0, 0);
         \foreach \x in {0cm, 6cm, 12cm} {
            \draw[color=blue] [xshift=\x] (0, 0) -- (3, 2) -- (6, 0);
         }
        
         \foreach \y in {0cm, 4cm} {
            \begin{scope}[yshift=\y]
               \foreach \x in {0cm, 6cm, 12cm} {
                  \draw[color=green,dash pattern=on 4pt off 3pt on 1.0pt off 3pt] [xshift=\x] (-3, 2) -- (0, 4) -- (3, 2);
               }
               \draw[color=green,style=dashed] (15, 2) -- (18, 4);
               \draw[color=red,style=dashed] (-3, 6) -- (0, 4);
               \foreach \x in {0cm, 6cm, 12cm} {
                  \draw[color=red,dash pattern=on 4pt off 3pt on 1.0pt off 3pt on 1.0pt off 3pt] [xshift=\x,yshift=4cm] (0, 0) -- (3, 2) -- (6, 0);
               }
            \end{scope}
         }
         
         \draw (-3pt,4) -- (3pt,4) node[left] {$\Delta t$};
         \draw (0, 10.5) node[left] {$t$};
         \draw (-3pt,8) -- (3pt,8) node[left] {$2 \Delta t$};
         \draw (12,2) node {$\Delta x$};
         \draw[<-] (9.5,2) -- (11.3,2);
         \draw[->] (12.7,2) -- (14.5,2);
         \draw (0, -0.5) node[below] {$a$};
         \draw (9, -0.5) node[below] {$x$};
         \draw (18, -0.5) node[below] {$b$};
        
         \draw[->,blue] (7.5,3) -- (7.0,4.666);
         \draw[->,blue] (7.5,3) -- (10.5,5);
         \draw[->,blue] (10.5,3) -- (7.5,5);
         \draw[->,blue] (10.5,3) -- (11.0,4.666);

         \draw[<-] (17,1) -- (20,1) node[right] {forward Euler};
         \draw[<-] (17,3) -- (20,3) node[right] {$1^\mathrm{st}$ half-step};
         \draw[<-] (17,5) -- (20,5) node[right] {finish $1^\mathrm{st}$ step};
         \draw[<-] (17,7) -- (20,7) node[right] {$2^\mathrm{nd}$ half-step};
         \draw[<-] (17,9) -- (20,9) node[right] {finish $2^\mathrm{nd}$ step};
      
      \end{tikzpicture}

      \caption{Information flows upwards as indicated by the solid blue 
      arrows for a typical diamond.  The solution, $\mathbf{z}$, is initialized on
      the solid blue zig-zag line using a forward Euler step.
      A step of
      the diamond scheme consists of two half-steps.  The first half-step
      calculates $\mathbf{z}$ along the green dash-dot line, which by periodicity is
      extended to the dashed line to the right.  The second half step uses
      the green dash-dot line to calculate the red dash-double-dotted line, which again by
      periodicity is extended to the left-hand dashed segment.}
      \label{fig:domain}
   \end{figure}
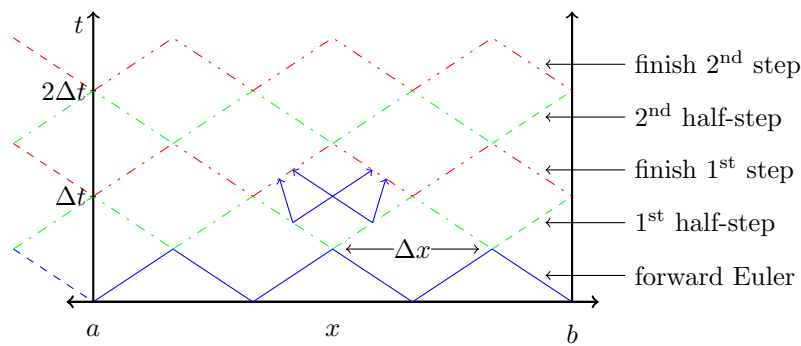
The solution
$\mathbf{z}$ is calculated at grid points located on the solid diamond edges; dashed edges indicate where values are
inferred by periodicity.  Information follows from the bottom
left and right edges of a diamond to the top left and right edges of
the same diamond.   Initial conditions are given for
$\mathbf{z}$ along the bottom edges of the first row of diamonds (the
first solid blue zig-zag line in the figure).   This is extended using
periodicity beyond the left hand boundary (the blue dashed line).  A
step of the diamond scheme consists of two half-steps.  The first half-step
calculates $\mathbf{z}$ along the top edges of the first row of
diamonds (green dash-dot zig-zag), which by periodicity is extended to the
right-hand boundary (the green dashed line).  The second half-step uses
values on the top edges of the first row of diamonds (green dash-dot line) to
calculate the new values of $\mathbf{z}$ on the top edges of the
second row (red dash-double-dotted line).  Again by periodicity the values from the top
right edge of the right most diamond are copied outside the left hand
boundary of the domain (the red dashed segment).  Another step can be
performed now using the red dash-double-dotted zig-zag as initial data (the very right
hand line segment is not used except to provide values for the dashed
line).

\subsubsection*{Updating one diamond}

Let $(A,b,c)$ be the parameters of an $r$-stage Runge--Kutta method. In what
follows, we will take the method to be the Gauss Runge--Kutta method.
Figure~\eqref{fig:single_diamond} shows a
diamond with $r = 3$, and its transformation to the unit square.
The square contains $r \times
r$ internal grid points, as determined by the Runge-Kutta coefficients $c$,
 and internal stages $\mathbf{Z}_i^j$, which are analogous to the usual
internal grid points and stages in a Runge-Kutta method. The internal stages also carry 
the variables $\mathbf{X}_i^j$ and $\mathbf{T}_i^j$ which approximate $z_x$ and $z_t$, respectively, at the internal stages.

The dependent variables of the method are the values of $z$ at the edge grid points
To be able to distinguish the internal
edge points from all the edge points let $I$ be the set of indices
$\left\{1, \ldots, r\right\}$.  Then, for example, $\tilde{\mathbf{z}}^\mathrm{b}_I$ refers
to $\tilde{\mathbf{z}}^\mathrm{b}_i, i = 1 \ldots r $.
If the $I$ qualifier does not appear
then the left or bottom most
corner is included, for example $\tilde{\mathbf{z}}^\mathrm{b}$ refers to the points
$\tilde{\mathbf{z}}^\mathrm{b}_i, i = 0 \ldots r $, but does not include
$\tilde{\mathbf{z}}^\mathrm{b}_{r+1}$ which is 
$\tilde{\mathbf{z}}_\mathrm{r}^0$.
Note $\tilde{\mathbf{z}}^\mathrm{b}_0 = \tilde{\mathbf{z}}_\ell^0$ (which is also the same as $\mathbf{z}_0^{-1}$).
   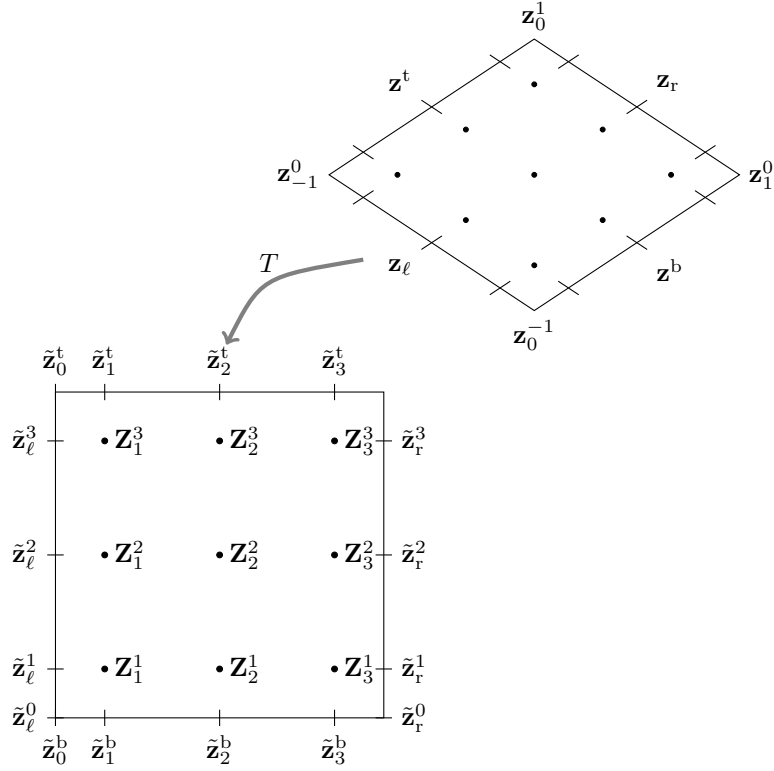
\begin{figure}
      \centering
      \begin{tikzpicture}[scale=0.9]

         \draw (-3, 0) -- (0, -2) -- (3, 0) -- (0, 2) -- cycle;
         \draw (0, 2) node[above]{$\mathbf{z}_0^1$};
         \draw (0, -2) node[below]{$\mathbf{z}_0^{-1}$};
         \draw (3, 0) node[right]{$\mathbf{z}_1^{0}$};
         \draw (-3, 0) node[left]{$\mathbf{z}_{-1}^0$};

         \draw [xshift=0.5cm,yshift=-1.667cm] (-0.15, 0.1) -- (0.15, -0.1); 
         \draw [xshift=1.5cm,yshift=-1cm] (-0.15, 0.1) -- (0.15, -0.1) node[below right]{$\mathbf{z}^\mathrm{b}$};
         \draw [xshift=2.5cm,yshift=-0.333cm] (-0.15, 0.1) -- (0.15, -0.1); 

         \draw [xshift=-2.5cm,yshift=-0.333cm] (-0.15, -0.1) -- (0.15, 0.1); 
         \draw [xshift=-1.5cm,yshift=-1cm]  (-0.15, -0.1) node[below left]{$\mathbf{z}_\ell$} -- (0.15, 0.1);
         \draw [xshift=-0.5cm,yshift=-1.667cm] (-0.15, -0.1) -- (0.15, 0.1); 

         \begin{scope}[xshift=-3cm,yshift=2cm]
         \draw [xshift=0.5cm,yshift=-1.667cm] (-0.15, 0.1) -- (0.15, -0.1); 
         \draw [xshift=1.5cm,yshift=-1cm] (-0.15, 0.1) node[above left]{$\mathbf{z}^\mathrm{t}$} -- (0.15, -0.1);
         \draw [xshift=2.5cm,yshift=-0.333cm] (-0.15, 0.1) -- (0.15, -0.1); 
         \end{scope}

         \begin{scope}[xshift=3cm,yshift=2cm]
         \draw [xshift=-2.5cm,yshift=-0.333cm] (-0.15, -0.1) -- (0.15, 0.1); 
         \draw [xshift=-1.5cm,yshift=-1cm]  (-0.15, -0.1) -- (0.15, 0.1)   node[above right]{$\mathbf{z}_\mathrm{r}$};
         \draw [xshift=-0.5cm,yshift=-1.667cm] (-0.15, -0.1) -- (0.15, 0.1); 
         \end{scope}

         \filldraw   (-2, 0) circle (1pt) 
         (-1, 0.667) circle (1pt) 
         (0, 1.333) circle (1pt) 
         (-1, -0.667) circle (1pt) 
         (0, 0) circle (1pt) 
         (1, 0.667) circle (1pt) 
         (0, -1.333) circle (1pt) 
         (1, -0.667) circle (1pt) 
         (2, 0) circle (1pt) ;

         \draw[->,gray,ultra thick] (-2.5,-1.25) .. controls (-4.0,-1.5) .. (-4.5, -2.5) node[pos=0.5,above,black] {$T$};

         \begin{scope}[xshift=-7cm, yshift=-8cm, scale=1.2]

         \draw (0, 0) -- (4, 0) -- (4, 4) -- (0, 4) -- cycle;

         \foreach \x / \xtext in {0, 0.6/1, 2, 3.4/3}
            \draw [xshift=\x cm,yshift=0cm] (0, 0.1) -- (0, -0.1) node[below]{$\tilde{\mathbf{z}}^\mathrm{b}_\xtext$};
         \foreach \x / \xtext in {0, 0.6/1, 2, 3.4/3}
            \draw [xshift=\x cm,yshift=4cm] (0, 0.1) node[above]{$\tilde{\mathbf{z}}^\mathrm{t}_\xtext$} -- (0, -0.1);
         \foreach \y / \ytext in {0, 0.6/1, 2, 3.4/3}
            \draw [xshift=0cm,yshift=\y cm] (-0.1, 0)  node[left]{$\tilde{\mathbf{z}}_\ell^\ytext$} -- (0.1, 0);
         \foreach \y / \ytext in {0, 0.6/1, 2, 3.4/3}
            \draw [xshift=4cm,yshift=\y cm] (-0.1, 0) -- (0.1, 0) node[right]{$\tilde{\mathbf{z}}_\mathrm{r}^\ytext$};
         \foreach \y / \ytext in {0.6/1, 2, 3.4/3} {
            \foreach \x / \xtext in {0.6/1, 2, 3.4/3} {
               \filldraw (\x, \y) circle (1pt) node[right]{$\mathbf{Z}_\xtext^\ytext$};
            }
         }
            \end{scope}

      \end{tikzpicture}

      \caption{The diamond transformed by a linear transformation,
      $T$, to the unit square.  The square
      contains $r \times r$ ($r=3$ in this example) internal stages, $\mathbf{Z}_i^j$.
      The solution is known along the bottom and left hand sides.
      The method proceeds as two sets of $r$ Gauss Runge-Kutta $r$-step
      methods: internal stage values, $\mathbf{Z}_i^j,\ \mathbf{X}_i^j,\ \mathbf{T}_i^j$, are calculated, then the
      right and top updated.}
      \label{fig:single_diamond}
   \end{figure}
The Runge--Kutta discretization is
\begin{align}
   \mathbf{Z}_i^j &= \tilde{\mathbf{z}}_\ell^j + \sum_{k=1}^r a_{ik} \mathbf{X}_k^j, \label{eqn:msZ1} \\
   \mathbf{Z}_i^j &= \tilde{\mathbf{z}}^\mathrm{b}_i + \sum_{k=1}^r a_{jk} \mathbf{T}_i^k, \label{eqn:msZ2} \\
   \nabla S(\mathbf{Z}_i^j) &= \tilde{K} \mathbf{T}_i^j + \tilde{L} \mathbf{X}_i^j, \label{eqn:msZ3}
\end{align}
together with the update equations
\begin{align}
   \tilde{\mathbf{z}}_\mathrm{r}^i &= \tilde{\mathbf{z}}_\ell^i + \sum_{k=1}^r b_k  \mathbf{X}_k^i, \label{eqn:msupdate1} \\
   \tilde{\mathbf{z}}^\mathrm{t}_i &= \tilde{\mathbf{z}}^\mathrm{b}_i + \sum_{k=1}^r b_k\mathbf{T}_i^k, \label{eqn:msupdate2}
\end{align}
for $i,j \in I$.  The
$\tilde{\mathbf{z}}_\ell^I$ and
$\tilde{\mathbf{z}}^\mathrm{b}_I$ are known.
Eqs.~\eqref{eqn:msZ1}--\eqref{eqn:msZ3} are first solved for the
internal stage values $\mathbf{Z}_i^j$, $\mathbf{X}_i^j$, and $\mathbf{T}_i^j$, then Eqs.~\eqref{eqn:msupdate1}
and~\eqref{eqn:msupdate2} are used to calculate 
$\tilde{\mathbf{z}}^\mathrm{t}_I$ and
$\tilde{\mathbf{z}}_\mathrm{r}^I$.  
Eqs.~\eqref{eqn:msZ1}--\eqref{eqn:msZ3} 
are $3r^2$
equations in $3r^2$ unknowns 
$\mathbf{Z}$, $\mathbf{X}$, and $\mathbf{T}$.
Eqs.~\eqref{eqn:msZ1} and~\eqref{eqn:msZ2} are linear in 
$\mathbf{X}$ and $\mathbf{T}$. 
Thus in practice the method requires solving a set of $r^2n$ nonlinear
equations for $\mathbf{Z}$ in each diamond.

The method does not use values at the corners. However, if solutions
are wanted at the corners then  the method can be extended by: 
allowing $j = 0$ in
Eq.~\eqref{eqn:msZ1} and associating $\mathbf{Z}_i^0$ with
$\tilde{\mathbf{z}}^\mathrm{b}_i$; and
allowing $i = 0$ in
Eq.~\eqref{eqn:msZ2} and associating $\mathbf{Z}_0^j$ with
$\tilde{\mathbf{z}}_\ell^j$.  This extension gives equations for
$\mathbf{X}_k^0$ and 
$\mathbf{T}_0^k$, which can be used in
the update Eqs.~\eqref{eqn:msupdate1} and~\eqref{eqn:msupdate2}
which are extended by allowing $i=0$.
On this extended domain
Eqs.~\eqref{eqn:msZ1}--\eqref{eqn:msZ3} are
$2r(r+1) + r^2$  equations.  This is because: Eq.~\eqref{eqn:msZ1}
is extended onto the bottom boundary ($r(r+1)$ equations), 
Eq.~\eqref{eqn:msZ2} onto the left boundary ($r(r+1)$ equations),
but there is no need to extend Eq.~\eqref{eqn:msZ3} onto either
boundaries because $\mathbf{T}$ was not extended onto the bottom
boundary, and $\mathbf{X}$ was not
extended onto the left boundary.  The
number of unknowns is $2r(r+1) + r^2$, so again there are the same
number of equations as unknowns.
Corner points are shared by two adjacent diamonds, and typically
$\tilde{\mathbf{z}}^\mathrm{b}_{r+1}\ne\tilde{\mathbf{z}}_\ell^{r+1}$. In
practice the mean of these two approximations is used.

Here is a summary of the diamond scheme algorithm:
\greybox{%
   Let $\tilde{\mathtt{z}}$ and $\tilde{\mathtt{z}}_n$ be $N(2r+1)$ length
   vectors with each element in $\mathbb{R}^n$.  These vectors contain
   $\tilde{\mathbf{z}}$ values for two particular edges of each diamond.  Each
   of the two edges has $r$ nodes, plus there is the value at the
   bottom, hence $2r+1$ values per $N$ diamonds.
   \begin{enumerate}
      \item Using Forward Euler, initialize $\tilde{\mathtt{z}}$.  It now contains $\tilde{\mathbf{z}}$ for the blue zig-zag at the bottom in figure~\eqref{fig:domain}.
      \item The half-step.  For each diamond:
      \begin{enumerate}
         \item Associate $\tilde{\mathbf{z}}_\ell$ and $\tilde{\mathbf{z}}^\mathrm{b}$ with the correct values in $\tilde{\mathtt{z}}$ (periodicity is used at the edges).
         \item Solve Eqs.~\eqref{eqn:msZ1}--\eqref{eqn:msZ3}.
         \item Use Eqs.~\eqref{eqn:msupdate1} and~\eqref{eqn:msupdate2} to find $\tilde{\mathbf{z}}_\mathrm{r}$ and $\tilde{\mathbf{z}}^\mathrm{t}$
         \item Associate $\tilde{\mathbf{z}}_\mathrm{r}$ and $\tilde{\mathbf{z}}^\mathrm{t}$ with the current diamond's section of $\tilde{\mathtt{z}}_n$ (periodicity is used at the edges).
      \end{enumerate}
      \item $\tilde{\mathtt{z}} = \tilde{\mathtt{z}}_n$.  Now $\tilde{\mathtt{z}}$ contains $\tilde{\mathbf{z}}$ values for the second/green zig-zag in figure~\eqref{fig:domain}.
      \item Perform step 2 again.
      \item $\tilde{\mathtt{z}} = \tilde{\mathtt{z}}_n$.  Now $\tilde{\mathtt{z}}$ contains $\tilde{\mathbf{z}}$ values for the third/red zig-zag in figure~\eqref{fig:domain}.
      \item If final time not reached go to step 2.
   \end{enumerate}
}

\begin{thm}
For the multi-Hamiltonian one dimensional wave equation defined 
by Eqs.~\eqref{eqn:hampde} and~\eqref{eqn:1dwaveeqn}
with the following conditions:
\begin{itemize}
   \item $f$ is Lipschitz with constant $L$,
   \item The matrix
      \begin{equation*}
         B = (1-\lambda^2)(  I \otimes A^{-2}  ) + 2(1+\lambda^2) (A^{-1} \otimes A^{-1}) + (1-\lambda^2) (  A^{-2}  \otimes I ), 
      \end{equation*}
      where $A$ is the matrix of coefficients of the underlying Runge--Kutta scheme and $\lambda = \frac{\Delta t}{\Delta x}$ is the Courant number,
      exists and is invertible, and,
   \item $(\Delta t)^2 < \frac{1}{L \norm{B^{-1}}_\infty}$,
\end{itemize}
Eqs.~\eqref{eqn:msZ1}--~\eqref{eqn:msZ3} are solvable, and thus the diamond scheme is well defined.
\end{thm}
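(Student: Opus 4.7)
The plan is to reduce the full $3r^2$-dimensional system to a fixed-point equation for the $u$-component of the internal stages alone and then apply the Banach contraction mapping theorem. First, since the Gauss Runge--Kutta coefficient matrix $A$ is invertible, Eqs.~\eqref{eqn:msZ1}--\eqref{eqn:msZ2} can be solved for $\mathbf{X}$ and $\mathbf{T}$ in terms of $\mathbf{Z}$. Writing $P = A^{-1}$ and arranging each of the three physical components of the stage values into an $r \times r$ matrix (row index $i$, column index $j$), this gives $\mathbf{X} = P(\mathbf{Z} - \mathbf{1}\tilde{\mathbf{z}}_\ell^T)$ and $\mathbf{T} = (\mathbf{Z} - \tilde{\mathbf{z}}^\mathrm{b}\mathbf{1}^T) P^T$ componentwise.

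Next I would exploit the special structure of the wave equation: since $\nabla S(\mathbf{z}) = (f(u), v, -w)^T$ is affine in $v$ and $w$ and nonlinear only in $u$, the second and third components of Eq.~\eqref{eqn:msZ3} become explicit formulas for the stage matrices $V = (V_i^j)$ and $W = (W_i^j)$ in terms of $U = (U_i^j)$, namely $V = \tfrac{1}{\Delta t}(PU + UP^T)$ and $W = \tfrac{1}{\Delta x}(PU - UP^T)$ up to boundary data. Substituting these into the first component of Eq.~\eqref{eqn:msZ3}, using the explicit forms of $\tilde{K}$ and $\tilde{L}$ from~\eqref{eqn:KLtilde}, and multiplying through by $-(\Delta t)^2$ yields
\begin{equation*}
(1-\lambda^2)\, P^2 U + 2(1+\lambda^2)\, P U P^T + (1-\lambda^2)\, U (P^T)^2 = g - (\Delta t)^2 f(U),
\end{equation*}
where $g$ collects the known boundary contributions and $f$ is applied componentwise. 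Vectorizing column-wise and using $(M \otimes N)\,\mathrm{vec}(U) = \mathrm{vec}(N U M^T)$ identifies the left-hand side with $B\,\mathrm{vec}(U)$ for the matrix $B$ in the statement.

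By the invertibility hypothesis on $B$, the reduced system is equivalent to the fixed-point equation $\mathrm{vec}(U) = \Phi(\mathrm{vec}(U))$ with $\Phi(u) = B^{-1} g - (\Delta t)^2\, B^{-1}\, \mathrm{vec}(f(u))$; the Lipschitz bound on $f$ gives $\norm{\Phi(u_1) - \Phi(u_2)}_\infty \le (\Delta t)^2\, \norm{B^{-1}}_\infty\, L\, \norm{u_1 - u_2}_\infty$, so the third hypothesis forces $\Phi$ to be a contraction, and Banach's theorem then yields a unique $U$, from which $V$, $W$, $\mathbf{X}$ and $\mathbf{T}$ are recovered explicitly. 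The main obstacle is the algebra of the reduction: keeping track of the sign conventions coming from $\tilde{K}$ and $\tilde{L}$ and verifying that the coefficients really collapse into the symmetric form defining $B$. Once that is in place, both the invertibility and contraction arguments are routine.
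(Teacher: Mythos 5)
Your proposal is correct and follows essentially the same route as the paper's proof: eliminate $\mathbf{X}$ and $\mathbf{T}$ via $A^{-1}$, use the affine dependence of $\nabla S$ on $v,w$ to solve for the $V$ and $W$ stages explicitly, reduce to a linear system $B\,\mathrm{vec}(U) = \text{const} + (\Delta t)^2 f(U)$ for the $u$-stages, and conclude by the contraction mapping theorem. The only difference is cosmetic --- you phrase the reduction in matrix/Kronecker form where the paper uses index notation with $m_{ij}$ --- and your vectorization identity correctly reproduces the stated $B$.
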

\begin{proof}
Eqs.~\eqref{eqn:msZ1} and~\eqref{eqn:msZ2} relate components of
matrices and tensors.  Writing these equations in tensor form, then
multiplying on the left by $A^{-1}$ gives expressions for $
\mathbf{X}_i^j$ and $\mathbf{T}_i^j$.  Substituting
these into Eq.~\eqref{eqn:msZ3} gives
\begin{equation*}
   \nabla S(\mathbf{Z}_i^j) = \tilde{K} \sum_{k=1}^r m_{jk} (\mathbf{Z}_i^k - \mathbf{z}^\mathrm{b}_i ) + \tilde{L} \sum_{k=1}^r m_{ik} (\mathbf{Z}_k^j - \mathbf{z}_\ell^k ),
\end{equation*}
where $m_{ij}$ are the elements of $A^{-1}$, and the tildes on the $\mathbf{z}$ values have been dropped for clarity.
Using Eq.~\eqref{eqn:KLtilde} for 
$\tilde{K}$ and $\tilde{L}$, and adopting the summation convention, this becomes
\begin{equation*}
   \begin{pmatrix} -f(u_i^j)\\v_i^j\\-w_i^j\end{pmatrix} =
      \begin{pmatrix} 0 & \frac{-1}{\Delta t} & \frac{-1}{\Delta x} \\ \frac{1}{\Delta t} & 0 & 0 \\ \frac{1}{\Delta x} & 0 &0 \end{pmatrix} 
                        m_{jk} (\mathbf{Z}_i^k - \mathbf{z}^\mathrm{b}_i ) + 
      \begin{pmatrix} 0 & \frac{-1}{\Delta t} & \frac{1}{\Delta x} \\ \frac{1}{\Delta t} & 0 & 0 \\ \frac{-1}{\Delta x} & 0 &0 \end{pmatrix} 
                        m_{ik} (\mathbf{Z}_k^j - \mathbf{z}_\ell^k ).
\end{equation*}
The solution for $v_i^j$ and $w_i^j$,
\begin{align*}
   v_i^j &= \frac{1}{\Delta t} m_{jk}(u_i^k - u^\mathrm{b}_i ) + \frac{1}{\Delta t} m_{ik}(u_k^j - u_\ell^k ),\\
   w_i^j &= \frac{-1}{\Delta x} m_{jk}(u_i^k - u^\mathrm{b}_i ) + \frac{1}{\Delta x} m_{ik}(u_k^j - u_\ell^k ),
\end{align*}
is substituted into the equation for $u_i^j$, which after
simplification gives
\begin{equation*}
   (1-\lambda^2)m_{jk}m_{kp} u_i^p + 2(1+\lambda^2)m_{jk}m_{ip}u_p^k+(1-\lambda^2)m_{ik}m_{kp}u_p^j = \mathbf{b} + (\Delta t)^2 f(u_i^j),
\end{equation*}
where the vector $\mathbf{b}$
is a constant term depending on $A^{-1}$ and the initial data
$\mathbf{z}_\ell$ and $\mathbf{z}^\mathrm{b}$.
Let $\mathbf{u} = (u_1^1, u_1^2, \ldots, u_1^r, u_2^1, \ldots, u_3^1, \ldots, u_r^r)$ and $f(\mathbf{u})
= (f(u_1^1), \ldots, f(u_r^r))$, then this simplifies to
\begin{equation}
   B \mathbf{u} = \mathbf{b} +  \Delta t^2f(\mathbf{u}),  \label{eqn:solvableu}
\end{equation}
where $B$ is given in the conditions of the theorem.
To complete the proof it must be shown that this equation has a solution.
Because $B$ is invertible,
$G(\mathbf{u}) = B^{-1}(\mathbf{b} +  \Delta t^2f(\mathbf{u}))$ exists.  Consider $G$ applied to the two points $\mathbf{u}^1$ and $\mathbf{u}^2$
\begin{align*}
   \norm{G(\mathbf{u}^1) - G(\mathbf{u}^2)}_\infty &= \Delta t^2 \norm{B^{-1}(f(\mathbf{u}^1)-f(\mathbf{u}^2))}_\infty \\
                                                   &\le \Delta t^2 \norm{B^{-1}}_\infty L \norm{\mathbf{u}^1 - \mathbf{u}^2}_\infty.
\end{align*}
By the contraction mapping theorem and the condition on $\Delta t$, $G$ must have a fixed point $\mathbf{u} = G(\mathbf{u})$, thus Eq.~\eqref{eqn:solvableu} has a solution.
\end{proof}

For a particular Runge--Kutta method it is straightforward to calculate the matrix
$B$ and determine the conditions on $\lambda$ that lead to solvability.
Figure~\eqref{fig:min_sv_B_versus_c} shows that for Gauss Runge--Kutta and $r=1,\dots,5$ and $\lambda
\in [0, 1]$, the minimum singular value of $B$ is nonzero.  This calculation can
be performed for larger $r$.
\begin{figure}
   \centering
   \includegraphics[width=3in]{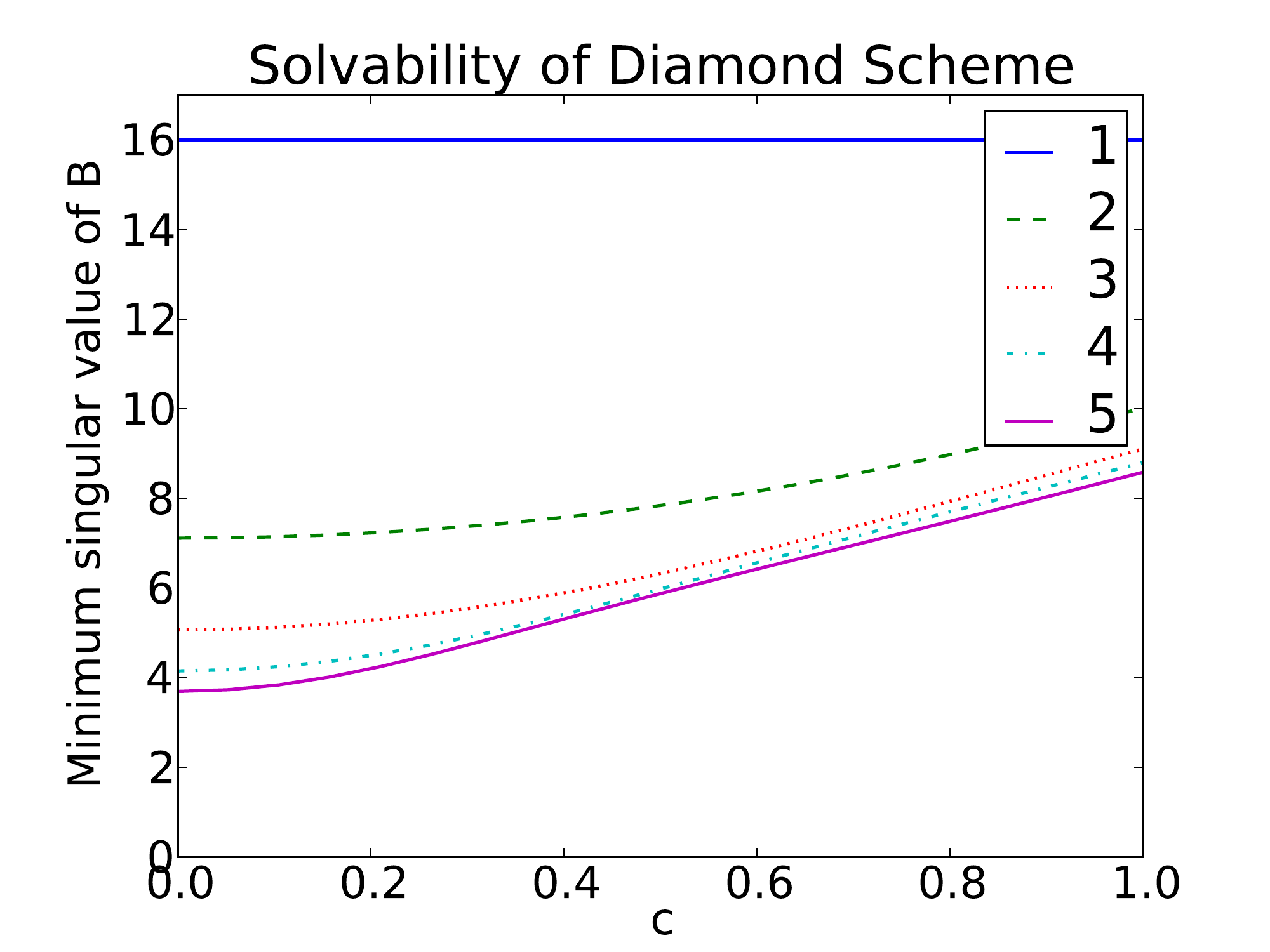}
   \input{\inputdir/min_sv_B_versus_c_caption}
   \label{fig:min_sv_B_versus_c}
\end{figure}

\begin{thm} \label{thm:diamond_conservation_law}
The diamond scheme satisfies the discrete symplectic conservation law
$$
\frac{1}{\Delta t} \sum_{i=1}^r b_i ( \omega_i^\mathrm{t} + \omega_\mathrm{r}^i - (\omega_\ell^i + \omega_i^\mathrm{b}) ) +
\frac{1}{\Delta x} \sum_{i=1}^r b_i ( \kappa_\mathrm{r}^i + \kappa_i^\mathrm{b} - (\kappa_i^\mathrm{t} + \kappa_\ell^i)) ) = 0,
$$
where 
$\omega_n^m = \tfrac{1}{2} d \mathbf{z}_n^m \wedge K d \mathbf{z}_n^m$,
$\kappa_n^m = \tfrac{1}{2} d \mathbf{z}_n^m \wedge L d \mathbf{z}_n^m$,
and $m, n \in \left[0, r\right] \cup \left\{\mathrm{t}, \mathrm{r},
\mathrm{b}, \ell\right\}$ (refer to figure~\eqref{fig:single_diamond}
for the definition of those labels).
\end{thm}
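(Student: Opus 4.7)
The plan is to establish the symplectic conservation law first in the transformed $(\tilde x,\tilde t)$-coordinates via the standard symplectic Runge--Kutta telescoping argument, then convert back to $(x,t)$ using the definitions of $\tilde K$ and $\tilde L$ from Eq.~\eqref{eqn:KLtilde}.

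First I would take the exterior derivative of Eq.~\eqref{eqn:msZ3} and wedge $d\mathbf{Z}_i^j$ on the left. Because the Hessian of $S$ is symmetric, $d\mathbf{Z}_i^j \wedge d(\nabla S(\mathbf{Z}_i^j))$ vanishes, leaving
\begin{equation*}
d\mathbf{Z}_i^j \wedge \tilde K\, d\mathbf{T}_i^j + d\mathbf{Z}_i^j \wedge \tilde L\, d\mathbf{X}_i^j = 0.
\end{equation*}
Multiplying by $b_i b_j$ and summing over $i,j\in I$ splits the identity into two independent pieces. For the $\tilde L$-piece I fix $j$ and evaluate $\sum_i b_i\, d\mathbf{Z}_i^j \wedge \tilde L\, d\mathbf{X}_i^j$ by inserting Eq.~\eqref{eqn:msZ1} for $d\mathbf{Z}_i^j$, exploiting the symmetry $\xi \wedge \tilde L\,\eta = \eta \wedge \tilde L\,\xi$ (an immediate consequence of the skew-symmetry of $\tilde L$) to symmetrise the resulting double sum, using the symplecticity identity $b_i a_{ik} + b_k a_{ki} = b_i b_k$ satisfied by Gauss Runge--Kutta, and then applying the update Eq.~\eqref{eqn:msupdate1} to recognise $\sum_k b_k\, d\mathbf{X}_k^j = d\tilde{\mathbf{z}}_\mathrm{r}^j - d\tilde{\mathbf{z}}_\ell^j$. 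The outcome is the boundary term $\tilde\kappa_\mathrm{r}^j - \tilde\kappa_\ell^j$, where $\tilde\kappa_n^m := \tfrac12 d\tilde{\mathbf{z}}_n^m \wedge \tilde L\, d\tilde{\mathbf{z}}_n^m$. An entirely analogous computation in the $\tilde t$-direction, this time using Eqs.~\eqref{eqn:msZ2} and~\eqref{eqn:msupdate2}, handles the $\tilde K$-piece and delivers $\tilde\omega_i^\mathrm{t} - \tilde\omega_i^\mathrm{b}$. Adding the two yields
\begin{equation*}
\sum_{i=1}^r b_i\bigl[(\tilde\omega_i^\mathrm{t} - \tilde\omega_i^\mathrm{b}) + (\tilde\kappa_\mathrm{r}^i - \tilde\kappa_\ell^i)\bigr] = 0,
\end{equation*}
which is the desired conservation law in the transformed coordinates.

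To finish, I would substitute $\tilde K = \tfrac{1}{\Delta t}K - \tfrac{1}{\Delta x}L$ and $\tilde L = \tfrac{1}{\Delta t}K + \tfrac{1}{\Delta x}L$. Since $d\tilde{\mathbf{z}} = d\mathbf{z}$ at the grid points, these give $\tilde\omega = \tfrac{1}{\Delta t}\omega - \tfrac{1}{\Delta x}\kappa$ and $\tilde\kappa = \tfrac{1}{\Delta t}\omega + \tfrac{1}{\Delta x}\kappa$ at every labelled node. Collecting the coefficients of $1/\Delta t$ and $1/\Delta x$ separately then reproduces the stated identity after the cross-terms in $\kappa$ and $\omega$ cancel.

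The main obstacle is the symmetrisation step in the inner $\tilde L$ sum: one must notice that the off-diagonal sum $\sum_{i,k} b_i a_{ik}\, d\mathbf{X}_k^j \wedge \tilde L\, d\mathbf{X}_i^j$ can only be telescoped into $\tfrac12 (\sum_k b_k\, d\mathbf{X}_k^j)\wedge \tilde L\,(\sum_i b_i\, d\mathbf{X}_i^j)$ after first using the hidden symmetry of $\xi\wedge \tilde L\,\eta$ to replace $b_i a_{ik}$ by the symmetric combination $\tfrac12(b_i a_{ik}+b_k a_{ki})$, which the symplecticity identity then collapses to $\tfrac12 b_i b_k$. Once this trick is in hand the two directions are genuinely parallel, and the rest of the proof is bookkeeping, including the final conversion between tilde and untilded variables.
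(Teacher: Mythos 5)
Your proposal is correct and follows essentially the same route as the paper: establish the multisymplectic conservation law on the transformed unit square, then push it back to $(x,t)$ by substituting $\tilde K = \tfrac{1}{\Delta t}K - \tfrac{1}{\Delta x}L$ and $\tilde L = \tfrac{1}{\Delta t}K + \tfrac{1}{\Delta x}L$ and collecting the $1/\Delta t$ and $1/\Delta x$ terms. The only difference is that the paper simply cites the unit-square conservation law $\sum_i b_i(\tilde\omega_i^{\mathrm t}-\tilde\omega_i^{\mathrm b}) + \sum_i b_i(\tilde\kappa_{\mathrm r}^i-\tilde\kappa_\ell^i)=0$ from the Runge--Kutta box-scheme literature, whereas you derive it from scratch via the standard $b_ia_{ik}+b_ka_{ki}=b_ib_k$ telescoping argument; your derivation is sound (one quibble: in the final bookkeeping no cross-terms actually cancel --- the $-\tfrac{1}{\Delta x}(\kappa_i^{\mathrm t}-\kappa_i^{\mathrm b})$ contribution from $\tilde\omega$ and the $\tfrac{1}{\Delta t}(\omega_{\mathrm r}^i-\omega_\ell^i)$ contribution from $\tilde\kappa$ simply land in the stated sums with the correct signs).
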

\begin{proof}
The solver within each square satisfies
the discrete multisymplectic conservation law~\cite{ryland}
$$
\Delta x \sum_{i=1}^r b_i ( \tilde{\omega}_i^\mathrm{t} - \tilde{\omega}_i^\mathrm{b} ) +
\Delta t \sum_{i=1}^r b_i ( \tilde{\kappa}_\mathrm{r}^i - \tilde{\kappa}_\ell^i ) = 0,
$$
where $\Delta x = \Delta t = 1$ because the square has side length one. 
Substituting in $\tilde{\omega}_n^m =  \tfrac{1}{2} d \tilde{\mathbf{z}}_n^m \wedge \tilde{K} d \tilde{\mathbf{z}}_n^m$ and
$\tilde{\kappa}_n^m =  \tfrac{1}{2} d \tilde{\mathbf{z}}_n^m \wedge \tilde{L} d \tilde{\mathbf{z}}_n^m$
$$
\frac{1}{2} \sum_{i=1}^r b_i ( d \tilde{\mathbf{z}}_i^\mathrm{t} \wedge \tilde{K} d \tilde{\mathbf{z}}_i^\mathrm{t} - d \tilde{\mathbf{z}}_i^\mathrm{b} \wedge \tilde{K} d \tilde{\mathbf{z}}_i^\mathrm{b} + d \tilde{\mathbf{z}}_\mathrm{r}^i \wedge \tilde{L} d \tilde{\mathbf{z}}_\mathrm{r}^i - d \tilde{\mathbf{z}}_\ell^i \wedge \tilde{L} d \tilde{\mathbf{z}}_\ell^i ) = 0
$$
Using $d \tilde{\mathbf{z}}_n^m = d \mathbf{z}_n^m$, and Eqs.~\eqref{eqn:KLtilde} this becomes
\begin{multline*}
\frac{1}{2} \sum_{i=1}^r b_i ( d \mathbf{z}_i^\mathrm{t} \wedge (\tfrac{1}{\Delta t} K - \tfrac{1}{\Delta x} L) d \mathbf{z}_i^\mathrm{t} - d \mathbf{z}_i^\mathrm{b} \wedge (\tfrac{1}{\Delta t} K - \tfrac{1}{\Delta x} L) d \mathbf{z}_i^\mathrm{b} \\ 
\shoveright{+ d \mathbf{z}_\mathrm{r}^i \wedge (\tfrac{1}{\Delta t} K + \tfrac{1}{\Delta x} L) d \mathbf{z}_\mathrm{r}^i - d \mathbf{z}_\ell^i \wedge (\tfrac{1}{\Delta t} K + \tfrac{1}{\Delta x} L) d \mathbf{z}_\ell^i ) = 0} \\
\Rightarrow \frac{1}{\Delta t} \sum_{i=1}^r b_i ( \omega^\mathrm{t}_i - \omega^\mathrm{b}_i + \omega^i_\mathrm{r} - \omega^i_\ell )    +\frac{1}{\Delta x} \sum_{i=1}^r b_i ( -\kappa^\mathrm{t}_i + \kappa^\mathrm{b}_i + \kappa^i_\mathrm{r} - \kappa^i_\ell ) = 0
\end{multline*}
\end{proof}

We now examine the relationship between the simple diamond scheme
(which uses corner values only) and the $r=1$ diamond scheme (which
uses edge values only). To relate the two, note that the extension to
the corners of the $r=1$ diamond scheme discussed previously, in which
Eqs.~\eqref{eqn:msZ1}, \eqref{eqn:msZ2}, \eqref{eqn:msupdate1},
\eqref{eqn:msupdate2} are applied with $i=j=0$, leads on a single
diamond to
\begin{equation} \label{eqn:equiv1neat}
\begin{aligned}
   \tilde{\mathbf{z}}^\mathrm{b} &= \frac{\mathbf{z}_0^{-1} + \mathbf{z}_1^0}{2}, & \tilde{\mathbf{z}}^\mathrm{t} &= \frac{\mathbf{z}_{-1}^0 + \mathbf{z}_0^1}{2}, \\
   \tilde{\mathbf{z}}_\ell &= \frac{\mathbf{z}_0^{-1} + \mathbf{z}_{-1}^0}{2}, & \tilde{\mathbf{z}}_\mathrm{r} &= \frac{\mathbf{z}_1^0 + \mathbf{z}_0^1}{2},
\end{aligned}
\end{equation}
where the sub/superscript 1 has been dropped.

\begin{thm} \label{thm:r1simpleequiv}
(i) Any solution of the simple diamond scheme, mapped to edge midpoint values
according to Eq.~\eqref{eqn:equiv1neat}, satisfies the equations of the $r=1$ diamond scheme.
(ii) Any solution of the $r=1$ diamond scheme corresponds under
Eq.~\eqref{eqn:equiv1neat} locally to a 1-parameter family of solutions to
the simple diamond scheme. With periodic boundary conditions, the
correspondence is global iff the solution satisfies $\sum_i{\mathbf
z}_{\ell i,j}^1 =\sum_i{\mathbf z}^\mathrm{b}_{1 i,j}$ for all $j$,
where the subscript $_{i,j}$ refers to the $i^\mathrm{th}$ diamond at
the $j^\mathrm{th}$ time step.
\end{thm}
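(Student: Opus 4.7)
The plan is to handle (i) by direct substitution of the $r=1$ Gauss Runge--Kutta coefficients $a_{11}=\tfrac12$, $b_1=1$ into \eqref{eqn:msZ1}--\eqref{eqn:msupdate2}, and to handle (ii) by a linear-algebraic analysis of the map from corner values to edge midpoints given by \eqref{eqn:equiv1neat}, followed by an iteration around a periodic row of diamonds.

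For part (i), I assume the simple diamond scheme \eqref{eqn:discretehampde}--\eqref{eqn:centrept} holds and define the four midpoints via \eqref{eqn:equiv1neat}. With these coefficients, \eqref{eqn:msZ1}--\eqref{eqn:msZ2} and \eqref{eqn:msupdate1}--\eqref{eqn:msupdate2} immediately force
$\mathbf{Z}_1^1 = \tfrac12(\tilde{\mathbf z}_\ell+\tilde{\mathbf z}_{\mathrm r}) = \tfrac12(\tilde{\mathbf z}^{\mathrm b}+\tilde{\mathbf z}^{\mathrm t})$, $\mathbf{X}_1^1 = \tilde{\mathbf z}_{\mathrm r}-\tilde{\mathbf z}_\ell$ and $\mathbf{T}_1^1 = \tilde{\mathbf z}^{\mathrm t}-\tilde{\mathbf z}^{\mathrm b}$. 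Substituting \eqref{eqn:equiv1neat} and comparing with \eqref{eqn:centrept} shows $\mathbf{Z}_1^1=\mathbf z_0^0$, and using \eqref{eqn:KLtilde} a short calculation collapses $\tilde K\mathbf T_1^1+\tilde L\mathbf X_1^1$ to $\tfrac{1}{\Delta t}K(\mathbf z_0^1-\mathbf z_0^{-1})+\tfrac{1}{\Delta x}L(\mathbf z_1^0-\mathbf z_{-1}^0)$, so \eqref{eqn:msZ3} is exactly \eqref{eqn:discretehampde}. The remaining update equations are then algebraic identities in \eqref{eqn:equiv1neat}.

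For the local half of (ii), I observe that the linear map $(\mathbf z_0^{-1},\mathbf z_1^0,\mathbf z_0^1,\mathbf z_{-1}^0)\mapsto(\tilde{\mathbf z}^{\mathrm b},\tilde{\mathbf z}_\ell,\tilde{\mathbf z}^{\mathrm t},\tilde{\mathbf z}_{\mathrm r})$ defined by \eqref{eqn:equiv1neat} has a one-dimensional kernel spanned by $(1,-1,1,-1)$ and an image equal to $\{\tilde{\mathbf z}_\ell+\tilde{\mathbf z}_{\mathrm r}=\tilde{\mathbf z}^{\mathrm b}+\tilde{\mathbf z}^{\mathrm t}\}$. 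The $r=1$ diamond scheme automatically enforces this constraint, because both sides equal $2\mathbf Z_1^1$. Hence the preimage of any admissible midpoint tuple is nonempty and is a one-parameter ($\mathbb R^n$-valued) family of corner tuples; reading the calculation of part~(i) in reverse, every member of that family satisfies the simple diamond scheme \eqref{eqn:discretehampde}--\eqref{eqn:centrept}.

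For the global claim under periodic boundary conditions, I parameterise a single time row by the bottom central corners $C_i$, side corners $S_i$ and top central corners $C'_i$ for $i=1,\dots,N$, with the identification $S_0\equiv S_N$. Fixing $C_1$ arbitrarily, \eqref{eqn:equiv1neat} gives $S_i=2\tilde{\mathbf z}^{\mathrm b}_{1\,i,j}-C_i$ and $C_{i+1}=2\tilde{\mathbf z}^1_{\ell\,i+1,j}-S_i$, which telescopes to
\[
C_{N+1}-C_1 \;=\; 2\sum_{i=1}^N\bigl(\tilde{\mathbf z}^1_{\ell\,i+1,j}-\tilde{\mathbf z}^{\mathrm b}_{1\,i,j}\bigr).
\]
After a periodic reindex of the first sum, $C_{N+1}=C_1$ reduces exactly to the stated condition $\sum_i\mathbf z^1_{\ell\,i,j}=\sum_i\mathbf z^{\mathrm b}_{1\,i,j}$. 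The two ways of computing each $C'_i$ (from $\tilde{\mathbf z}_{\mathrm r}$ or from $\tilde{\mathbf z}^{\mathrm t}$) agree by the per-diamond constraint already used, so no further global condition is generated; the $C'_i$ then become the bottom centrals of the next row and the same telescoping argument applies at every $j$. The main obstacle is keeping the corner/edge bookkeeping between adjacent diamonds consistent when writing the telescoping sum, as the shared side corners $S_i$ must be interpreted both as ``right of diamond $i$'' and ``left of diamond $i+1$''; once that is organised the argument is purely linear algebra.
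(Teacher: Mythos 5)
Your proof is correct and follows essentially the same route as the paper: reduce the $r=1$ scheme to its mean-value/constraint form (the paper's Eqs.~\eqref{eqn:equivr1neat}--\eqref{eqn:equivr2neat}), substitute \eqref{eqn:equiv1neat}, and recover the corner values up to one $\mathbb{R}^n$-valued parameter that propagates through adjacent diamonds. Your explicit kernel/image analysis and the telescoping sum around the periodic row are just a more detailed rendering of the paper's terser statement that the edge values must lie in the range of the mean-value operator, and they correctly reproduce the stated global condition.
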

\begin{proof}
When $r=1$ Eqs.~\eqref{eqn:msZ1}--\eqref{eqn:msupdate2} become
\begin{align}
   \mathbf{Z} &= \tilde{\mathbf{z}}_\ell + \frac{1}{2} \mathbf{X} \label{eqn:msZ3r1} \\
   \mathbf{Z} &= \tilde{\mathbf{z}}^\mathrm{b} + \frac{1}{2}  \mathbf{T} \label{eqn:msZ3r2} \\
   \nabla S(\mathbf{Z}_1^1) &= \tilde{K} \mathbf{T} + \tilde{L}  \mathbf{X} \label{eqn:msZ3r3} \\
   \tilde{\mathbf{z}}_\mathrm{r} &= \tilde{\mathbf{z}}_\ell +  \mathbf{X} \label{eqn:msZ3r4} \\
   \tilde{\mathbf{z}}^\mathrm{t} &= \tilde{\mathbf{z}}^\mathrm{b} +  \mathbf{T}, \label{eqn:msZ3r5} 
\end{align}
where $\tilde{K}$ and $\tilde{L}$ are the transformed $K$ and $L$
given in Eq.~\eqref{eqn:KLtilde}, and the sub/superscript 1 has been 
omitted in 
$\tilde{\mathbf{z}}_\ell$,
$\tilde{\mathbf{z}}^\mathrm{b}$,
$\tilde{\mathbf{z}}_\mathrm{r}$,
$\tilde{\mathbf{z}}^\mathrm{t}$, $\mathbf{Z}$, $\mathbf{X}$, and $\mathbf{T}$.

Eliminating $\mathbf{X}$, $\mathbf{T}$, and $\mathbf{Z}$ from the 5 equations
\eqref{eqn:msZ3r1}--\eqref{eqn:msZ3r5} yields the equivalent formulation
\begin{align}
   \tilde{K}\left(\tilde{\mathbf{z}}^\mathrm{t} - \tilde{\mathbf{z}}^\mathrm{b} \right) + \tilde{L} \left(\tilde{\mathbf{z}}_\mathrm{r}-\tilde{\mathbf{z}}_\ell\right) &= \nabla S\left(\frac{\tilde{\mathbf{z}}^\mathrm{t} + \tilde{\mathbf{z}}^\mathrm{b} + \tilde{\mathbf{z}}_\mathrm{r} + \tilde{\mathbf{z}}_\ell}{4}\right),\label{eqn:equivr1neat} \\
   \tilde{\mathbf{z}}^\mathrm{t}- \tilde{\mathbf{z}}_\mathrm{r}+ \tilde{\mathbf{z}}^\mathrm{b}- \tilde{\mathbf{z}}_\ell &= 0. \label{eqn:equivr2neat}
  \end{align}

(i) 
Substituting the relations \eqref{eqn:equiv1neat} in the equations of the simple diamond scheme
\eqref{eqn:discretehampde}, \eqref{eqn:centrept} gives 
\begin{align*}
\nabla S\left(\frac{\mathbf{z}_0^{-1} + \mathbf{z}_{-1}^0 + \mathbf{z}_1^0 + \mathbf{z}_0^1}{4}\right) &=   K \left( \frac{\mathbf{z}_0^1 - \mathbf{z}_0^{-1}}{\Delta t} \right) + L \left( \frac{\mathbf{z}_1^0 - \mathbf{z}_{-1}^0}{\Delta x} \right)   \\
   \Rightarrow
\nabla S\left(\frac{\tilde{\mathbf{z}}^\mathrm{t} + \tilde{\mathbf{z}}^\mathrm{b} + \tilde{\mathbf{z}}_\mathrm{r}+\tilde{\mathbf{z}}_\ell}{4}\right)  &= \tfrac{1}{\Delta t} K \left(\tilde{\mathbf{z}}^\mathrm{t} - \tilde{\mathbf{z}}^\mathrm{b} + \tilde{\mathbf{z}}_\mathrm{r}-\tilde{\mathbf{z}}_\ell \right) + \tfrac{1}{\Delta x} L \left(\tilde{\mathbf{z}}_\mathrm{r}-\tilde{\mathbf{z}}_\ell -\tilde{\mathbf{z}}^\mathrm{t} + \tilde{\mathbf{z}}^\mathrm{b}\right) \\
&= (\tfrac{1}{\Delta t} K -\tfrac{1}{\Delta x} L) \left(\tilde{\mathbf{z}}^\mathrm{t} -
   \tilde{\mathbf{z}}^\mathrm{b} \right) + (\tfrac{1}{\Delta t} K + \tfrac{1}{\Delta x} L)
   \left(\tilde{\mathbf{z}}_\mathrm{r}-\tilde{\mathbf{z}}_\ell\right) && \text{by \eqref{eqn:KLtilde}} \\
&=  \tilde{K}  \left(\tilde{\mathbf{z}}^\mathrm{t} - \tilde{\mathbf{z}}^\mathrm{b} \right) + \tilde{L} \left(\tilde{\mathbf{z}}_\mathrm{r}-\tilde{\mathbf{z}}_\ell\right)  \\
\end{align*}
that is, the equations \eqref{eqn:equivr1neat} of the $r=1$ diamond
scheme are satisfied. Eq.~\eqref{eqn:equivr2neat} follows directly from Eq.~\eqref{eqn:equiv1neat}.

(ii) Using Eq.~\eqref{eqn:equiv1neat}, the corner values of one diamond can be recovered uniquely from the edge values and one corner value.  From these values, adjacent diamonds can be filled in, continuing to get a unique solution for the corner values in any simply-connected region. The same calculation as in part (i) now shows that these corner values satisfy the equations of the simple diamond scheme. For a global solution with periodic boundary conditions, the edge values at one time level $j$ must lie in the range of the mean value operator in \eqref{eqn:equiv1neat}, which gives the condition in the theorem. (If the condition holds at $j=1$, it holds for all $j$, from \eqref{eqn:equivr2neat}.) In both cases one corner value parameterizes the solutions.

\end{proof}

Theorem~\eqref{thm:r1simpleequiv} implies that the multisymplectic conservation laws of the simple and $r=1$ diamond schemes are equivalent under \eqref{eqn:equiv1neat}.  This is now proved directly.
\begin{cor}
Under the relations \eqref{eqn:equiv1neat}, the simple diamond scheme and the $r=1$ diamond scheme have equivalent
discrete multisymplectic conservation laws.
\end{cor}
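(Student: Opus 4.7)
The plan is a direct algebraic verification: substitute the mean-value relations \eqref{eqn:equiv1neat} into the $r=1$ specialization of Theorem~\ref{thm:diamond_conservation_law} and show that the resulting identity coincides with the one stated in Proposition~\ref{prop:sds_conservation_law}. At $r=1$, with $b_1 = 1$, the diamond conservation law reduces, after suppressing the single summation index, to
\[
\frac{1}{\Delta t}\bigl(\omega^{\mathrm{t}} + \omega_{\mathrm{r}} - \omega_\ell - \omega^{\mathrm{b}}\bigr) + \frac{1}{\Delta x}\bigl(\kappa_{\mathrm{r}} + \kappa^{\mathrm{b}} - \kappa^{\mathrm{t}} - \kappa_\ell\bigr) = 0,
\]
with $\omega^{\mathrm{t}} = \tfrac{1}{2}\,d\tilde{\mathbf{z}}^{\mathrm{t}}\wedge K\,d\tilde{\mathbf{z}}^{\mathrm{t}}$ and analogous definitions for the remaining edges.

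The key algebraic input I will exploit is that, because $K$ is constant and skew, the pairing $\langle \alpha,\beta\rangle_K := \alpha\wedge K\beta$ on vector-valued $1$-forms is \emph{symmetric}: the sign from swapping the wedge and the sign from $K^{\top}=-K$ cancel. Consequently $(\alpha+\beta)\wedge K(\alpha+\beta)$ expands as $\langle\alpha,\alpha\rangle_K + 2\langle\alpha,\beta\rangle_K + \langle\beta,\beta\rangle_K$ exactly as in the ordinary quadratic-form case, and the same holds with $L$ in place of $K$. This reduces the entire verification to a tidy manipulation of quadratic forms.

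Writing $a := d\mathbf{z}_0^{-1}$, $b := d\mathbf{z}_1^0$, $c := d\mathbf{z}_0^1$, $d := d\mathbf{z}_{-1}^0$, the differentiated relations \eqref{eqn:equiv1neat} replace each edge differential by an average of two corner differentials, so each $\omega$ or $\kappa$ becomes $\tfrac{1}{8}$ times a sum of three bracket terms. Expanding and using symmetry to consolidate duplicates, I expect the four time contributions to combine to
\[
\omega^{\mathrm{t}} + \omega_{\mathrm{r}} - \omega_\ell - \omega^{\mathrm{b}} = \tfrac{1}{4}\bigl(\langle c,c\rangle_K - \langle a,a\rangle_K + \langle c,d\rangle_K + \langle b,c\rangle_K - \langle a,b\rangle_K - \langle a,d\rangle_K\bigr),
\]
which is precisely $\tfrac{1}{4}\bigl((d+c+b)\wedge Kc - (d+a+b)\wedge Ka\bigr)$, the time part appearing in Proposition~\ref{prop:sds_conservation_law}. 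The identical computation with $L$ in place of $K$, respecting the reversed sign pattern in the $\kappa$ terms, handles the spatial half. No property of the discrete equations themselves enters; the only real task is careful bookkeeping of the eight quadratic brackets, and this is purely mechanical.
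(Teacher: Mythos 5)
Your proposal is correct and follows essentially the same route as the paper's proof: specialize Theorem~\ref{thm:diamond_conservation_law} to $r=1$ with $b_1=1$, differentiate \eqref{eqn:equiv1neat} to replace each edge differential by the mean of two corner differentials, and expand the resulting quadratic expressions to recover the law of Proposition~\ref{prop:sds_conservation_law}; your explicit use of the symmetry of $(\alpha,\beta)\mapsto\alpha\wedge K\beta$ for skew $K$ just makes precise the ``expanding and simplifying'' step that the paper leaves implicit, and your intermediate identity for $\omega^{\mathrm{t}}+\omega_{\mathrm{r}}-\omega_\ell-\omega^{\mathrm{b}}$ checks out.
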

\begin{proof}
Substitute $r=1$ into Theorem~\eqref{thm:diamond_conservation_law}, note $b_1 = 1$, and differentiate Eq.~\eqref{eqn:equiv1neat} to
get $d \mathbf{z}_1^\mathrm{t} = (d \mathbf{z}_0^1 + d \mathbf{z}_{-1}^0)/2$,
$d \mathbf{z}_\mathrm{r}^1 = (d \mathbf{z}_0^1 + d \mathbf{z}_{1}^0)/2$,
$d \mathbf{z}_1^\mathrm{b} = (d \mathbf{z}_1^0 + d \mathbf{z}_{0}^{-1})/2$, and
$d \mathbf{z}_\ell^1 = (d \mathbf{z}_0^{-1} + d \mathbf{z}_{-1}^0)/2$, leading to
\begin{multline*}
\frac{1}{8\Delta t} \left(  (d \mathbf{z}_0^1 + d \mathbf{z}_{-1}^0) \wedge K  (d \mathbf{z}_0^1 + d \mathbf{z}_{-1}^0) + (d \mathbf{z}_0^1 + d \mathbf{z}_{1}^0) \wedge K (d \mathbf{z}_0^1 + d \mathbf{z}_{1}^0) \right. \\
          \left. - (d \mathbf{z}_0^{-1} + d \mathbf{z}_{-1}^0) \wedge K (d \mathbf{z}_0^{-1} + d \mathbf{z}_{-1}^0) - (d \mathbf{z}_1^0 + d \mathbf{z}_{0}^{-1}) \wedge K (d \mathbf{z}_1^0 + d \mathbf{z}_{0}^{-1})\right) \\
\shoveleft{+ \frac{1}{8\Delta x} \left(  (d \mathbf{z}_0^1 + d \mathbf{z}_{1}^0) \wedge L (d \mathbf{z}_0^1 + d \mathbf{z}_{1}^0) + (d \mathbf{z}_1^0 + d \mathbf{z}_{0}^{-1}) \wedge L (d \mathbf{z}_1^0 + d \mathbf{z}_{0}^{-1}) \right.} \\
          \left. - (d \mathbf{z}_0^1 + d \mathbf{z}_{-1}^0) \wedge L (d \mathbf{z}_0^1 + d \mathbf{z}_{-1}^0) - (d \mathbf{z}_0^{-1} + d \mathbf{z}_{-1}^0) \wedge L (d \mathbf{z}_0^{-1} + d \mathbf{z}_{-1}^0) \right) = 0,
\end{multline*}
which upon expanding and simplifying leads to the  simple diamond scheme conservation law given in proposition~\eqref{prop:sds_conservation_law}.
\end{proof}

\section{Numerical test of diamond scheme}

The diamond scheme with varying $r$ was used to solve the Sine--Gordon
equation as in Section \ref{sec:sinegordon}.  The exact solution is
the so-called \emph{breather} given in Eq.~\eqref{eqn:breather},
and the error is the discrete 2-norm of $u$,
$$
E^2 = \frac{b - a}{N} \sum_i^N \left(\tilde{u}_i - u(a + i \Delta x, T) \right)^2.
$$
The number of diamonds at each time level is $N=40,80,\dots,1280$, and the integration time, $T$, is twice the largest time step. 
The Courant number $\frac{\Delta t}{\Delta x} = \frac{1}{2}$ is held fixed.
The $2rN$ initial values of $z=(u,u_t,u_x)$ needed at the bottom edge of the
first row of diamonds are provided by the exact solution.
The results for the global error are shown in
Fig.~\eqref{fig:error_r12345_sinegordon_periodic_s2_internal0}.  It is apparent that for this problem, the
order
appears to be $r$ when $r$ is odd and $r+1$ when $r$ is even.
\begin{figure}
   \centering
   \includegraphics[width=3in]{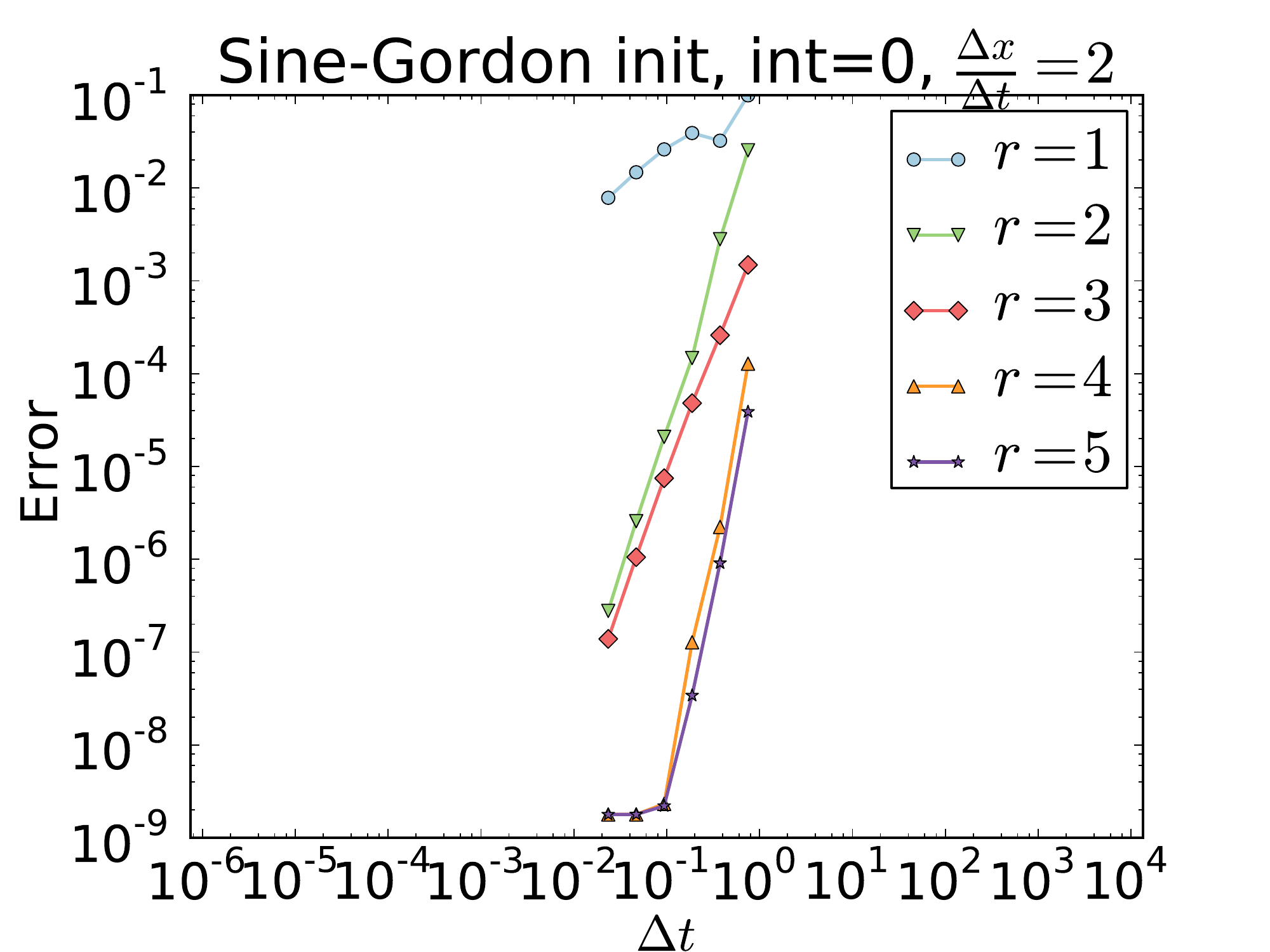}
   \input{\inputdir/error_r12345_sinegordon_periodic_s2_internal0_caption}
   \label{fig:error_r12345_sinegordon_periodic_s2_internal0}
\end{figure}

\section{Dispersion analysis}

\begin{lem} \label{lem:dispersionequation}
For the linear multi-Hamiltonian
\begin{equation}
   K\mathbf{z}_t + L\mathbf{z}_x = S \mathbf{z}, \label{eqn:linearhampde}
\end{equation}
where $S$ is a constant $n \times n$ real symmetric matrix, the
dispersion relation between the wave number $\xi \in \mathbb{R}$ and
frequency $\omega \in \mathbb{R}$  is given by
\begin{equation*}
   p(\xi, \omega) = \mathrm{det}(-i \omega K + i \xi L - S) = 0.
\end{equation*}
\end{lem}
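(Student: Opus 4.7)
The plan is to use the standard plane-wave ansatz for linear constant-coefficient PDE systems and reduce the problem to a linear algebra condition. First, I would substitute $\mathbf{z}(x,t) = \mathbf{z}_0 \, e^{i(\xi x - \omega t)}$ with $\mathbf{z}_0 \in \mathbb{C}^n$ a nonzero constant vector into Eq.~\eqref{eqn:linearhampde}. Since $K$, $L$, $S$ are all constant and the equation is linear, this ansatz is natural and compatible.

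Next, I would compute the derivatives $\mathbf{z}_t = -i\omega\, \mathbf{z}_0 \, e^{i(\xi x - \omega t)}$ and $\mathbf{z}_x = i\xi\, \mathbf{z}_0 \, e^{i(\xi x - \omega t)}$, substitute into the PDE, and cancel the common exponential factor to obtain the algebraic relation
\begin{equation*}
  (-i\omega K + i\xi L - S)\mathbf{z}_0 = 0.
\end{equation*}
A nonzero $\mathbf{z}_0$ exists if and only if the coefficient matrix is singular, which gives $\det(-i\omega K + i\xi L - S) = 0$, i.e., $p(\xi,\omega) = 0$.

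There is essentially no obstacle here — the argument is a two-line calculation. The only point worth mentioning is that because $K$ and $L$ are real skew-symmetric and $S$ is real symmetric, the matrix $-i\omega K + i\xi L - S$ is Hermitian for real $\xi,\omega$, so its determinant is real and the equation $p(\xi,\omega) = 0$ defines a real algebraic curve in the $(\xi,\omega)$-plane, consistent with the stated requirement $\xi,\omega \in \mathbb{R}$.
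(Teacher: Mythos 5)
Your proposal is correct and follows exactly the same route as the paper: plane-wave ansatz, substitution, and the singularity condition on the resulting coefficient matrix. The added remark that $-i\omega K + i\xi L - S$ is Hermitian for real $\xi,\omega$ (so the determinant is real) is a pleasant observation not in the paper, but the argument is otherwise identical.
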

\begin{proof}
Assume 
$
\mathbf{z} =  e^{i (\xi x - \omega t)} \mathbf{c}
$
where $\mathbf{c}$ is a constant vector, is a solution to Eq.~\eqref{eqn:linearhampde}.
Substitution yields
$$
   (-i \omega K + i \xi L - S)  \mathbf{c} = 0.
$$
For non-trivial solutions the matrix on the left must have zero determinant.
\end{proof}

If there are any solutions to $p(\omega,\xi)=0$ with $\xi$ real and $\omega$ complex but not real, then the PDE has solutions that grow without bound.
For example, the dispersion relation for the wave equation, $u_{tt} - u_{xx} =
0$, is $p(\xi, \omega) = \omega(\omega^2-\xi^2)=0$, so all solutions are bounded.  For the
equation $u_{tt} + u_{xx} = 0$, the dispersion relation is $p(\xi,
\omega) = \omega(\omega^2+ \xi^2)=0$, so there are unbounded solutions.

\begin{thm} \label{thm:dispersionsds}
The simple diamond scheme applied to the linear multi-Hamiltonian equation has 
dispersion relation between $ \mathcal{X} \Delta x, \Omega \Delta t \in [-\pi, \pi]$ defined by
\begin{equation*}
   P(\mathcal{X} \Delta x, \Omega \Delta t) = p(h(\mathcal{X} \Delta x, \Omega \Delta t)) = p(h_1(\mathcal{X} \Delta x, \Omega \Delta t), h_2(\mathcal{X} \Delta x, \Omega \Delta t)) = 0,
\end{equation*}
where $p$ is given in lemma~\eqref{lem:dispersionequation} and
\begin{align}
   h(x,y) &= (h_1(x, y), h_2(x, y)), \label{eqn:hdfn} \\
   h_1(x, y) &= \frac{4 \sin \left( \frac{1}{2} x \right)}{\Delta x \left(\cos \left(\frac{1}{2} x \right) +  \cos \left(\frac{1}{2} y \right)\right)}, \nonumber \\
   h_2(x, y) &= \frac{4 \sin \left( \frac{1}{2} y \right)}{\Delta t \left(\cos \left(\frac{1}{2} x \right) +  \cos \left(\frac{1}{2} y \right)\right)}. \nonumber
\end{align}
\end{thm}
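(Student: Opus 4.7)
The plan is to substitute a discrete plane-wave ansatz $\mathbf{z}_i^j = e^{i(\mathcal{X}\, i\Delta x/2 - \Omega\, j\Delta t/2)}\mathbf{c}$, with constant $\mathbf{c}\in\mathbb{C}^n$, into the linear simple diamond scheme obtained from \eqref{eqn:discretehampde}--\eqref{eqn:centrept} by replacing $\nabla S(\mathbf{z}_0^0)$ by $S\mathbf{z}_0^0$, and then reduce the resulting linear condition on $\mathbf{c}$ to a determinant equation via Lemma~\ref{lem:dispersionequation}.

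First I would compute the three combinations of grid values that appear in the scheme, using $e^{i\theta}-e^{-i\theta}=2i\sin\theta$ and $e^{i\theta}+e^{-i\theta}=2\cos\theta$ on the half-step exponentials. After factoring out the common phase $e^{i(\mathcal{X} x_0-\Omega t_0)}$, this gives
\begin{align*}
\mathbf{z}_0^1 - \mathbf{z}_0^{-1} &= -2i\sin(\Omega\Delta t/2)\,\mathbf{c},\\
\mathbf{z}_1^0 - \mathbf{z}_{-1}^0 &= 2i\sin(\mathcal{X}\Delta x/2)\,\mathbf{c},\\
\mathbf{z}_0^0 &= \tfrac{1}{2}\bigl(\cos(\mathcal{X}\Delta x/2)+\cos(\Omega\Delta t/2)\bigr)\mathbf{c}.
\end{align*}
Plugging these into the discrete equation and collecting terms yields the linear system
\[
\Bigl(-\tfrac{2i\sin(\Omega\Delta t/2)}{\Delta t}K + \tfrac{2i\sin(\mathcal{X}\Delta x/2)}{\Delta x}L - \tfrac{1}{2}\bigl(\cos(\mathcal{X}\Delta x/2)+\cos(\Omega\Delta t/2)\bigr)S\Bigr)\mathbf{c} = 0.
\]

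Next I would divide through by the scalar $\tfrac{1}{2}\bigl(\cos(\mathcal{X}\Delta x/2)+\cos(\Omega\Delta t/2)\bigr)$, which is nonzero on the interior of the Nyquist square $(\mathcal{X}\Delta x,\Omega\Delta t)\in(-\pi,\pi)^2$. The coefficients of $K$ and $L$ then become exactly $-ih_2(\mathcal{X}\Delta x,\Omega\Delta t)$ and $ih_1(\mathcal{X}\Delta x,\Omega\Delta t)$ in the notation of \eqref{eqn:hdfn}, so the system reduces to $(-ih_2\,K + ih_1\,L - S)\mathbf{c}=0$. A non-trivial $\mathbf{c}$ exists iff the coefficient matrix is singular, and by Lemma~\ref{lem:dispersionequation} applied with $\xi=h_1$, $\omega=h_2$ this determinant is precisely $p(h_1,h_2)$, giving the desired relation $P(\mathcal{X}\Delta x,\Omega\Delta t)=p(h_1,h_2)=0$.

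The computation is essentially trigonometric bookkeeping, so there is no substantive obstacle. The only mildly delicate point is the division by $\cos(\mathcal{X}\Delta x/2)+\cos(\Omega\Delta t/2)$, which vanishes on a measure-zero subset of the boundary of the Nyquist square; there the identity is recovered by continuity of $p$, or equivalently by noting that on that locus the original system already forces $S\mathbf{c}=0$ together with the $\sin$ constraints, which can be accommodated directly.
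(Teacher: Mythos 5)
Your proposal is correct and follows essentially the same route as the paper: substitute the discrete plane-wave ansatz into Eq.~\eqref{eqn:discretehampde}--\eqref{eqn:centrept}, convert the half-step exponentials to sines and cosines, divide by $\tfrac{1}{2}(\cos(\mathcal{X}\Delta x/2)+\cos(\Omega\Delta t/2))$ to expose $h_1$ and $h_2$, and invoke the determinant condition of Lemma~\ref{lem:dispersionequation}. Your remark on the vanishing denominator at the boundary of the Nyquist square is a point the paper passes over silently, but it does not change the substance of the argument.
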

\begin{proof}
Assume that a solution to the simple diamond scheme given in Eq.~\eqref{eqn:discretehampde} is
$
\mathbf{z}_j^n =  e^{i (\mathcal{X} j \Delta x - \Omega n \Delta t)} \mathbf{c},
$
where $\mathbf{c}$ is a constant vector, and because adding a multiple of $2 \pi$ to either of $\Omega \Delta t$ or $\mathcal{X} \Delta x$ does not change $\mathbf{z}_j^n$ they are restricted to $[-\pi, \pi]$.
Substitution into Eq.~\eqref{eqn:discretehampde} yields
\begin{equation*}
\begin{split}
   \left[\frac{1}{\Delta t} \left(e^{-i \Omega \frac{1}{2} \Delta t} - e^{i \Omega \frac{1}{2} \Delta t}\right) K +
   \frac{1}{\Delta x} \left(e^{i \mathcal{X} \frac{1}{2} \Delta x} - e^{-i \Omega \frac{1}{2} \Delta x}\right) L - \right. \\
   \left. \frac{1}{4}S\left(e^{-i \Omega \frac{1}{2} \Delta t} + e^{i \Omega \frac{1}{2} \Delta t} + e^{i \mathcal{X} \frac{1}{2} \Delta x} +e^{-i \mathcal{X} \frac{1}{2} \Delta x}   \right) \right] \mathbf{c} &= 0 \\
   \left[\frac{-2i}{\Delta t} \sin\left(\tfrac{1}{2} \Omega \Delta t\right)  K +
         \frac{2i}{\Delta x} \sin\left(\tfrac{1}{2} \mathcal{X} \Delta x\right) L -
         \frac{S}{2} \left(\cos\left(\tfrac{1}{2} \Omega \Delta t\right) + \cos\left(\tfrac{1}{2} \mathcal{X} \Delta x\right) \right) \right] \mathbf{c}  &= 0 \\
   \left[\frac{-4i \sin\left(\tfrac{1}{2} \Omega \Delta t\right)  K }{\Delta t \left(\cos\left(\tfrac{1}{2} \Omega \Delta t\right) + \cos\left(\tfrac{1}{2} \mathcal{X} \Delta x\right) \right) } +
         \frac{4i \sin\left(\tfrac{1}{2} \mathcal{X} \Delta x\right) L }{\Delta x \left(\cos\left(\tfrac{1}{2} \Omega \Delta t\right) + \cos\left(\tfrac{1}{2} \mathcal{X} \Delta x\right) \right) }  -
         S \right] \mathbf{c}  &= 0 \\
   \left[-i h_2(\mathcal{X}\Delta x, \Omega\Delta t) K + i h_1(\mathcal{X}\Delta x, \Omega\Delta t) L - S \right] \mathbf{c}  &= 0
\end{split} 
\end{equation*}
For non-trivial solutions the matrix on the left must have zero determinant, so $p(h_1(\mathcal{X}\Delta x, \Omega\Delta t), h_2(\mathcal{X}\Delta x, \Omega\Delta t)) = 0$.
\end{proof}

\begin{lem}
The $r=1$ diamond scheme applied to the linear
multi-Hamiltonian equation has a 
dispersion relation between $\tilde{\Omega}, \tilde{\mathcal{X}} \in [-\pi, \pi]$ defined by
\begin{equation*}
   \tilde{P}(\tilde{\mathcal{X}}, \tilde{\Omega}) = \mathrm{det}\left( -i 2 \tan\left(\tfrac{\tilde{\Omega}}{2}\right) \tilde{K} + i 2 \tan\left(\tfrac{\tilde{\mathcal{X}}}{2}\right) \tilde{L} - S \right) = 0.
\end{equation*}
The tildes are reminders that this dispersion relation is in the $(\tilde{x}, \tilde{t})$ coordinates.
\end{lem}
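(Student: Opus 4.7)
The plan is to parallel the proof of Theorem~\ref{thm:dispersionsds}, but working in the transformed $(\tilde{x},\tilde{t})$ coordinates where the $r=1$ scheme on a single diamond collapses (after eliminating the internal stages $\mathbf{Z},\mathbf{X},\mathbf{T}$) to the edge-midpoint relations \eqref{eqn:equivr1neat}--\eqref{eqn:equivr2neat}. For the linear problem $\nabla S(\mathbf{z}) = S\mathbf{z}$ both of these are linear in the four edge midpoint values $\tilde{\mathbf{z}}^\mathrm{b},\tilde{\mathbf{z}}^\mathrm{t},\tilde{\mathbf{z}}_\ell,\tilde{\mathbf{z}}_\mathrm{r}$, which sit at $(\tfrac{1}{2},0),(\tfrac{1}{2},1),(0,\tfrac{1}{2}),(1,\tfrac{1}{2})$ on the unit square.

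I would substitute the plane-wave ansatz $\tilde{\mathbf{z}}(\tilde{x},\tilde{t}) = e^{i(\tilde{\mathcal{X}}\tilde{x} - \tilde{\Omega}\tilde{t})}\mathbf{c}$ at these four midpoints and simplify with $e^{i\theta}-1 = 2i\sin(\theta/2)\,e^{i\theta/2}$ and $e^{i\theta}+1 = 2\cos(\theta/2)\,e^{i\theta/2}$: the edge differences pick up factors $\mp 2i\sin(\tilde{\Omega}/2)$ and $\pm 2i\sin(\tilde{\mathcal{X}}/2)$, the sums pick up $2\cos(\tilde{\Omega}/2)$ and $2\cos(\tilde{\mathcal{X}}/2)$, and everything carries the common phase $e^{i(\tilde{\mathcal{X}}-\tilde{\Omega})/2}$ which factors out. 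Equation \eqref{eqn:equivr1neat} then becomes
\[
[-2i\sin(\tilde{\Omega}/2)\tilde{K} + 2i\sin(\tilde{\mathcal{X}}/2)\tilde{L}]\mathbf{c} = \tfrac{1}{2}\bigl(\cos(\tilde{\Omega}/2)+\cos(\tilde{\mathcal{X}}/2)\bigr) S\mathbf{c},
\]
while \eqref{eqn:equivr2neat} yields the compatibility identity $\cos(\tilde{\Omega}/2) = \cos(\tilde{\mathcal{X}}/2)$. Substituting the latter into the former collapses the right-hand side to $\cos(\tilde{\Omega}/2)\,S\mathbf{c}$; dividing through by that common cosine converts each $\sin/\cos$ to $\tan$ and produces $[-2i\tan(\tilde{\Omega}/2)\tilde{K} + 2i\tan(\tilde{\mathcal{X}}/2)\tilde{L} - S]\mathbf{c} = 0$. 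Requiring a nontrivial $\mathbf{c}$ then gives $\tilde{P}(\tilde{\mathcal{X}},\tilde{\Omega})=0$.

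The main obstacle is reconciling the compatibility constraint from \eqref{eqn:equivr2neat}: with a single-polarization ansatz, $\cos(\tilde{\Omega}/2) = \cos(\tilde{\mathcal{X}}/2)$ is genuinely restrictive on $(\tilde{\mathcal{X}},\tilde{\Omega})$ (pulled back to $(x,t)$ it forces $\mathcal{X}=0$ or $\Omega=0$). The cleanest remedy is to equip the two orientations of edges in a diamond (positive- vs.~negative-slope in original coordinates) with distinct polarization vectors $\mathbf{c}_+,\mathbf{c}_-$; then \eqref{eqn:equivr2neat} reads $\cos(\tilde{\Omega}/2)\mathbf{c}_+ = \cos(\tilde{\mathcal{X}}/2)\mathbf{c}_-$, which eliminates $\mathbf{c}_-$ from \eqref{eqn:equivr1neat} and delivers the same tangent-form matrix equation for $\mathbf{c}_+$ with no restriction on $(\tilde{\mathcal{X}},\tilde{\Omega})$.
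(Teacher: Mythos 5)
Your proposal is correct and reaches the right dispersion relation, but it is worth seeing how it relates to the paper's argument. The paper's ansatz $\tilde{\mathbf{z}}_{\tilde{j}}^{\tilde{n}}=e^{i(\tilde{\mathcal{X}}\tilde{j}-\tilde{\Omega}\tilde{n})}\mathbf{c}$ is posed with \emph{integer} indices, i.e.\ on the corner values, and the four edge values are then the corner averages of Eq.~\eqref{eqn:equiv1neat}; this is why the substituted equation in the paper contains $1,e^{i\tilde{\mathcal{X}}},e^{-i\tilde{\Omega}},e^{i(\tilde{\mathcal{X}}-\tilde{\Omega})}$ in pairs, why Eq.~\eqref{eqn:equivr2neat} is satisfied identically rather than imposing a constraint, and why the factors $(e^{i\theta}-1)/(e^{i\theta}+1)=i\tan(\theta/2)$ appear directly. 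Your route instead places a single plane wave at the edge midpoints, correctly observes that Eq.~\eqref{eqn:equivr2neat} then forces the spurious restriction $\cos(\tilde{\Omega}/2)=\cos(\tilde{\mathcal{X}}/2)$, and repairs this with distinct amplitudes $\mathbf{c}_\pm$ on the two edge orientations subject to $\cos(\tilde{\Omega}/2)\mathbf{c}_+=\cos(\tilde{\mathcal{X}}/2)\mathbf{c}_-$. That repair is exactly right, and in fact it \emph{reconstructs} the paper's ansatz: averaging the corner plane wave over an edge yields $\mathbf{c}_+=\cos(\tilde{\mathcal{X}}/2)\mathbf{c}$ and $\mathbf{c}_-=\cos(\tilde{\Omega}/2)\mathbf{c}$, which satisfy your constraint automatically. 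So the two arguments are algebraically equivalent; what your version buys is an explicit identification of the compatibility condition that the paper's corner-based ansatz silently absorbs (and which any reader trying the naive midpoint ansatz will trip over), while the paper's version buys a shorter computation and a cleaner link to Theorem~\ref{thm:r1simpleequiv}, where the edge values of the $r=1$ scheme are recovered from corner values of the simple diamond scheme. One small caveat: when dividing by $\cos(\tilde{\Omega}/2)$ and $\cos(\tilde{\mathcal{X}}/2)$ you should note, as the tangent form itself suggests, that the boundary cases $\tilde{\Omega}=\pm\pi$ or $\tilde{\mathcal{X}}=\pm\pi$ are excluded or handled by continuity.
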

\begin{proof}
Assume that a solution to the $r=1$ diamond scheme given in Eqs.~\eqref{eqn:msZ1}--~\eqref{eqn:msupdate2} is
$$
\tilde{\mathbf{z}}_{\tilde{j}}^{\tilde{n}} =  e^{i (\tilde{\mathcal{X}} \tilde{j} - \tilde{\Omega} \tilde{n}) } \mathbf{c},
$$
where $\tilde{\Omega}$ and $\tilde{\mathcal{X}}$ can be restricted to $[-\pi, \pi]$, and
$\mathbf{c}$ is a constant vector.  Note that in $(\tilde{x}, \tilde{t})$ coordinates $\Delta x = \Delta t = 1$.
Substitution into Eqs.~\eqref{eqn:msZ1}--~\eqref{eqn:msupdate2} (or Eq.~\eqref{eqn:equivr1neat} because $r=1$) yields after some simplification
\begin{equation*}
\begin{split}
   \frac{\tilde{K}}{2} \left(e^{-i \tilde{\Omega}} + e^{i (\tilde{\mathcal{X}} - \tilde{\Omega})} - 1 - e^{i \tilde{\mathcal{X}}} \right)\mathbf{c} +
   \frac{\tilde{L}}{2} \left(e^{i \tilde{\mathcal{X}}} + e^{i (\tilde{\mathcal{X}} - \tilde{\Omega})} - 1 - e^{-i \tilde{\Omega}} \right)\mathbf{c} =  \\
   \frac{S}{4} \left(1 + e^{i \tilde{\mathcal{X}}} + e^{-i \tilde{\Omega}} +  e^{i (\tilde{\mathcal{X}} - \tilde{\Omega})} \right)\mathbf{c}.
\end{split} 
\end{equation*}
The result follows after some simplification and using $\tan(x) = \frac{i (1 - e^{2ix})}{1+e^{2ix}}$.
\end{proof}

Recall theorem~\ref{thm:r1simpleequiv}: modulo initial conditions, the $r=1$ diamond scheme and
simple diamond scheme are equivalent.
The following theorem shows that instead of directly calculating the dispersion
relation for the $r=1$ diamond scheme, the dispersion relation from the simple
diamond scheme can simply be transformed from $(x, t)$ coordinates to
$(\tilde{x}, \tilde{t})$ coordinates.

\begin{thm}
The simple and the $r=1$ diamond schemes have identical dispersion relations, that is,  $\tilde{P}(\tilde{\mathcal{X}}, \tilde{\Omega}) = P(\mathcal{X}, \Omega)$.
\end{thm}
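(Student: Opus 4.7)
The plan is to reduce the identity $\tilde P(\tilde{\mathcal{X}},\tilde{\Omega})=P(\mathcal{X},\Omega)$ to a purely trigonometric/algebraic calculation, once I pin down how the wave numbers and frequencies in the two coordinate systems are related.

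First I would invert the transformation~\eqref{eqn:xttilde} to obtain $x=\tfrac{\Delta x}{2}(\tilde x-\tilde t)$ and $t=\tfrac{\Delta t}{2}(\tilde x+\tilde t)$, and insert these into the plane-wave phase $\mathcal{X}x-\Omega t$. Collecting $\tilde x$ and $\tilde t$ coefficients identifies
\[
\tilde{\mathcal{X}}=\tfrac{1}{2}(\mathcal{X}\Delta x-\Omega\Delta t),\qquad
\tilde{\Omega}=\tfrac{1}{2}(\mathcal{X}\Delta x+\Omega\Delta t),
\]
so that $\tilde{\mathbf{z}}$ in the $(\tilde x,\tilde t)$ grid has the same Fourier content as $\mathbf{z}$ in the $(x,t)$ grid. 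This is the bridge between the two dispersion relations.

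Next I substitute the definitions $\tilde K=\tfrac{1}{\Delta t}K-\tfrac{1}{\Delta x}L$ and $\tilde L=\tfrac{1}{\Delta t}K+\tfrac{1}{\Delta x}L$ from~\eqref{eqn:KLtilde} into the $r=1$ determinant, obtaining
\[
\tilde P=\det\!\Bigl(\tfrac{2i}{\Delta t}\bigl(\tan\tfrac{\tilde{\mathcal{X}}}{2}-\tan\tfrac{\tilde{\Omega}}{2}\bigr)K
+\tfrac{2i}{\Delta x}\bigl(\tan\tfrac{\tilde{\mathcal{X}}}{2}+\tan\tfrac{\tilde{\Omega}}{2}\bigr)L-S\Bigr).
\]
Comparing with the simple-diamond expression $\det(-ih_2K+ih_1L-S)$ from Theorem~\ref{thm:dispersionsds}, the task is reduced to verifying the two scalar identities
\[
h_1=\tfrac{2}{\Delta x}\bigl(\tan\tfrac{\tilde{\mathcal{X}}}{2}+\tan\tfrac{\tilde{\Omega}}{2}\bigr),
\qquad
h_2=\tfrac{2}{\Delta t}\bigl(\tan\tfrac{\tilde{\Omega}}{2}-\tan\tfrac{\tilde{\mathcal{X}}}{2}\bigr).
\]

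The key step, and the only place the computation is not purely mechanical, is the trigonometric reduction. Writing $a=\tilde{\mathcal{X}}/2$ and $b=\tilde{\Omega}/2$, the phase substitution gives $\mathcal{X}\Delta x/2=a+b$ and $\Omega\Delta t/2=b-a$. The sum-to-product identity $\cos(a+b)+\cos(b-a)=2\cos a\cos b$ simplifies the common denominator of $h_1,h_2$ in~\eqref{eqn:hdfn}, and then the sine addition formulas $\sin(a+b)=\sin a\cos b+\cos a\sin b$ and $\sin(b-a)=\sin b\cos a-\cos b\sin a$ produce exactly the tangent sums above. Once both identities are checked, the two determinants coincide as polynomials in $K$, $L$, $S$, proving $\tilde P(\tilde{\mathcal{X}},\tilde{\Omega})=P(\mathcal{X},\Omega)$. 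I do not foresee a genuine obstacle; the only care needed is keeping track of the signs when relating $\tilde{\mathcal{X}},\tilde{\Omega}$ to $\mathcal{X},\Omega$ so that the sign of the $K$ term matches.
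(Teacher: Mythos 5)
Your proposal is correct and follows essentially the same route as the paper: derive $\tilde{\mathcal{X}}=\tfrac{1}{2}(\mathcal{X}\Delta x-\Omega\Delta t)$, $\tilde{\Omega}=\tfrac{1}{2}(\mathcal{X}\Delta x+\Omega\Delta t)$ from the coordinate change, substitute~\eqref{eqn:KLtilde} into the $r=1$ determinant, and reduce to the tangent/sine--cosine identities that recover $h_1$ and $h_2$. The only cosmetic difference is that you isolate the two scalar identities explicitly rather than manipulating the matrix expression in one chain, which is a fine (arguably cleaner) presentation of the identical argument.
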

\begin{proof}
\begin{align*} 
   \mathbf{z}_j^n &= \tilde{\mathbf{z}}_{\tilde{j}}^{\tilde{n}} \\
\Rightarrow   e^{i (\mathcal{X} j \Delta x - \Omega n \Delta t)} \mathbf{c} &= e^{i (\tilde{\mathcal{X}} \tilde{j} - \tilde{\Omega} \tilde{n}) } \mathbf{c} \\
\Rightarrow   e^{i (\mathcal{X} x - \Omega t)} \mathbf{c} &= e^{i (\tilde{\mathcal{X}} \tilde{x} - \tilde{\Omega} \tilde{t}) } \mathbf{c} \\
\Rightarrow   e^{i (\mathcal{X} \tfrac{\Delta x (\tilde{x} - \tilde{t})}{2} - \Omega \tfrac{\Delta t (\tilde{t} + \tilde{x})}{2} )} \mathbf{c} &= e^{i (\tilde{\mathcal{X}} \tilde{x} - \tilde{\Omega} \tilde{t}) } \mathbf{c}  && \text{using~\eqref{eqn:xttilde}} \\
\Rightarrow   e^{i ( \tfrac{\Delta x \mathcal{X} - \Omega \Delta t}{2} \tilde{x} - \tfrac{\Delta x \mathcal{X} + \Omega \Delta t}{2} \tilde{t})} \mathbf{c} &= e^{i (\tilde{\mathcal{X}} \tilde{x} - \tilde{\Omega} \tilde{t}) } \mathbf{c},
\end{align*} 
thus
\begin{equation} \label{eqn:XOtilde}
   \tilde{\mathcal{X}} = \tfrac{\Delta x \mathcal{X} - \Omega \Delta t}{2} \quad \mathrm{and} \quad \tilde{\Omega} = \tfrac{\Delta x \mathcal{X} + \Omega \Delta t}{2}
\end{equation}
Now
\begin{align*} 
-i &2 \tan\left(\tfrac{\tilde{\Omega}}{2}\right) \tilde{K} + i 2 \tan\left(\tfrac{\tilde{\mathcal{X}}}{2}\right) \tilde{L} - S \\ 
&=  -i 2 \tan\left(\tfrac{\Delta x \mathcal{X} + \Omega \Delta t}{4}\right) \tilde{K} + i 2 \tan\left(\tfrac{\Delta x \mathcal{X} - \Omega \Delta t}{4}\right) \tilde{L} - S && \text{using~\eqref{eqn:XOtilde}}\\
&=  -i 2 \tan\left(\tfrac{\Delta x \mathcal{X} + \Omega \Delta t}{4}\right) \left(\tfrac{1}{\Delta t} K - \tfrac{1}{\Delta x} L\right) + i 2 \tan\left(\tfrac{\Delta x \mathcal{X} - \Omega \Delta t}{4}\right) \left(\tfrac{1}{\Delta t} K + \tfrac{1}{\Delta x} L\right) - S && \text{using~\eqref{eqn:KLtilde}}\\
&=  -i \frac{2}{\Delta t} \left( \tan\left(\tfrac{\Delta x \mathcal{X} + \Omega \Delta t}{4}\right) - \tan\left(\tfrac{\Delta x \mathcal{X} - \Omega \Delta t}{4}\right) \right) K  + \\
& \quad \quad i \frac{2}{\Delta x} \left( \tan\left(\tfrac{\Delta x \mathcal{X} - \Omega \Delta t}{4}\right) + \tan\left(\tfrac{\Delta x \mathcal{X} + \Omega \Delta t}{4}\right) \right) L - S \\
&=  -i \frac{2}{\Delta t} \frac{ 2 \sin\left(\tfrac{\Delta t \Omega}{2}\right) } { \cos\left(\tfrac{\Delta x \mathcal{X}}{2}\right) + \cos\left(\tfrac{\Delta t \Omega}{2}\right)} K + 
                       i \frac{2}{\Delta x} \frac{ 2 \sin\left(\tfrac{\Delta x \mathcal{X}}{2}\right) } { \cos\left(\tfrac{\Delta x \mathcal{X}}{2}\right) + \cos\left(\tfrac{\Delta t \Omega}{2}\right)} L - S
\end{align*} 
where the last step used $\tan(\frac{a+b}{2}) = \frac{\sin(a)+\sin(b)}{\cos(a)+\cos(b)}$.
\end{proof}

\begin{lem} \label{lem:hdiffeo}
Let $U = \left(-\pi, \pi\right) \times
\left(-\pi, \pi\right)$ and let $V=h(U)$, where $h$ is defined in~\eqref{eqn:hdfn}.  The map $h\colon U \rightarrow V$
is a diffeomorphism.
\end{lem}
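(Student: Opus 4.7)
The plan is to reduce $h$ to a composition of obvious diffeomorphisms via a sum-to-product identity, which will simultaneously produce an explicit smooth inverse. Introduce new coordinates $\alpha=(x+y)/4$ and $\beta=(x-y)/4$, so that $x/2=\alpha+\beta$ and $y/2=\alpha-\beta$. The identities
\[
\cos(x/2)+\cos(y/2)=2\cos\alpha\cos\beta,\quad \sin(x/2)=\sin(\alpha+\beta),\quad \sin(y/2)=\sin(\alpha-\beta)
\]
allow the numerators of $h_1$ and $h_2$ to be expanded, and after cancelling the common factor $\cos\alpha\cos\beta$ one obtains the very clean relations
\[
h_1=\tfrac{2}{\Delta x}(\tan\alpha+\tan\beta),\qquad h_2=\tfrac{2}{\Delta t}(\tan\alpha-\tan\beta).
\]

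From here I would factor $h$ as the composition of three maps: (a) the linear bijection $(x,y)\mapsto(\alpha,\beta)$, which carries $U=(-\pi,\pi)^2$ diffeomorphically onto the open diamond $\{(\alpha,\beta):|\alpha|+|\beta|<\pi/2\}\subset(-\pi/2,\pi/2)^2$; (b) the smooth map $(\alpha,\beta)\mapsto(\tan\alpha,\tan\beta)$, which is a diffeomorphism onto its image since $\tan$ is a $C^\infty$ diffeomorphism $(-\pi/2,\pi/2)\to\mathbb{R}$ with strictly positive derivative $\sec^2$; and (c) the invertible linear rescaling $(u,v)\mapsto\bigl(\tfrac{2}{\Delta x}(u+v),\tfrac{2}{\Delta t}(u-v)\bigr)$, whose determinant $-8/(\Delta x\Delta t)$ is nonzero. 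Each factor is a diffeomorphism onto its image, hence so is the composition $h\colon U\to V$.

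Equivalently, and more concretely, I would solve the two boxed relations for $\tan\alpha$ and $\tan\beta$, then recover $x=2(\alpha+\beta)$ and $y=2(\alpha-\beta)$, yielding the explicit inverse
\[
x=2\arctan\!\Bigl(\tfrac{h_1\Delta x+h_2\Delta t}{4}\Bigr)+2\arctan\!\Bigl(\tfrac{h_1\Delta x-h_2\Delta t}{4}\Bigr),
\]
\[
y=2\arctan\!\Bigl(\tfrac{h_1\Delta x+h_2\Delta t}{4}\Bigr)-2\arctan\!\Bigl(\tfrac{h_1\Delta x-h_2\Delta t}{4}\Bigr).
\]
This is manifestly $C^\infty$ on all of $\mathbb{R}^2$, and $h\circ h^{-1}=\mathrm{id}$ may be verified by direct substitution using the same sum-to-product identities.

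The only real obstacle is spotting the change of variables $\alpha=(x+y)/4$, $\beta=(x-y)/4$ that decouples the map; once that is in hand, all remaining steps are elementary. A minor bookkeeping point is that the intermediate $(\alpha,\beta)$ range is the open diamond $|\alpha|+|\beta|<\pi/2$ rather than a product of intervals, but this is immaterial because $\tan$ is a diffeomorphism on all of $(-\pi/2,\pi/2)$ and hence on any open subset.
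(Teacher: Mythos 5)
Your proof is correct, and it takes a genuinely different route from the paper's. The paper computes the Jacobian of $h$ explicitly, checks $\det J \neq 0$ to get a local diffeomorphism, and then invokes a global inversion argument (properness of $h$ plus simple connectivity of $V$) to upgrade to a global diffeomorphism. Your approach instead factors $h$ as (linear change of variables) $\circ$ $(\tan,\tan)$ $\circ$ (linear rescaling) via the sum-to-product identity $\cos(x/2)+\cos(y/2)=2\cos\alpha\cos\beta$ with $\alpha=(x+y)/4$, $\beta=(x-y)/4$, and all three factors are transparently diffeomorphisms onto their images (the cancellation of $\cos\alpha\cos\beta$ is legitimate since $|\alpha|+|\beta|<\pi/2$ forces $\cos\alpha,\cos\beta>0$ on the image diamond). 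This buys you two things the paper's proof does not deliver cleanly: an explicit closed-form smooth inverse, and the avoidance of the global-inversion step, which as written in the paper is the weakest link (the claims that $V$ is simply connected and that $h$ is proper are asserted rather than proved, and "connected open subset of $\mathbb{R}^2$" does not by itself imply simple connectivity). Your decomposition is also consistent with, and effectively explains, the identity $\tan\bigl(\tfrac{a+b}{2}\bigr)=\tfrac{\sin a+\sin b}{\cos a+\cos b}$ that the paper uses later to match the $r=1$ and simple-scheme dispersion relations. The only bookkeeping point worth stating explicitly is the one you already flag: the intermediate domain is the open diamond $\{|\alpha|+|\beta|<\pi/2\}$, not the full square, but since it is an open subset of $(-\pi/2,\pi/2)^2$ the restriction of $(\tan,\tan)$ to it remains a diffeomorphism onto its image.
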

\begin{proof}
The Jacobian
of $(h_1, h_2)$ is
\begin{equation*}
   J = \frac{2}{\Delta t \left(\cos(\frac{x}{2}) + \cos(\frac{y}{2})\right)^2}
      \begin{pmatrix} \lambda (1 + \cos(\frac{x}{2}) \cos(\frac{y}{2})) & \lambda \sin(\frac{x}{2}) \sin(\frac{y}{2}) \\
                        \sin(\frac{x}{2}) \sin(\frac{y}{2}) & (1 + \cos(\frac{x}{2}) \cos(\frac{y}{2})),
   \end{pmatrix}
\end{equation*}
where $\lambda$ is the Courant number.  By definition $h$ is
surjective, and it is straightforward to show that
$\mathrm{det}(J) \ne 0$ for all $x, y\in U$.  Thus $J$ is a bijection
and $h$ is a local diffeomorphism.  Because both $U$ and $V$
are connected open subsets of $\mathbb{R}^2$, $V$ is simply connected,
and $h$ is proper, $h$ is a diffeomorphism.
\end{proof}

\begin{thm}
   The simple diamond scheme applied to the wave equation is stable
   when
   $\lambda = \frac{\Delta t}{\Delta x} \le 1$.
\end{thm}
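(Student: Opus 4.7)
The plan is to apply Theorem~\ref{thm:dispersionsds} and reduce stability to a statement about when the discrete dispersion relation has only real frequency roots. In the sense of the discussion after Lemma~\ref{lem:dispersionequation}, the scheme is stable precisely when, for every real wave number $\mathcal{X}\Delta x \in [-\pi,\pi]$, every root $\Omega\Delta t$ of $P(\mathcal{X}\Delta x,\Omega\Delta t)=0$ is real; any complex $\Omega$ would give an exponentially growing plane-wave mode.

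First I would specialize the continuous dispersion polynomial $p(\xi,\omega)=\det(-i\omega K + i\xi L - S)$ to the linearized wave equation, where $V=0$ and hence $S=\mathrm{diag}(0,1,-1)$. A direct computation shows that the characteristic variety is $\omega^2=\xi^2$, the standard dispersion of $u_{tt}=u_{xx}$. Then Theorem~\ref{thm:dispersionsds} gives the discrete dispersion relation $h_1(\mathcal{X}\Delta x,\Omega\Delta t)^2 = h_2(\mathcal{X}\Delta x,\Omega\Delta t)^2$. Using the explicit formulas \eqref{eqn:hdfn} and the fact that the common denominator $\cos(\mathcal{X}\Delta x/2)+\cos(\Omega\Delta t/2)$ is strictly positive on the open square $(-\pi,\pi)^2$, this reduces (after clearing the denominator and the $\Delta x,\Delta t$ factors) to
$$
\sin(\Omega\Delta t/2) = \pm\lambda\,\sin(\mathcal{X}\Delta x/2).
$$

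Next I would check both directions. For $\lambda \le 1$ and any real $\mathcal{X}\Delta x$, the right-hand side lies in $[-\lambda,\lambda]\subseteq[-1,1]$, so there is always a real solution $\Omega\Delta t/2 \in [-\pi/2,\pi/2]$; hence every mode is bounded and the scheme is stable. (For completeness one may observe that if $\lambda>1$, choosing $\mathcal{X}\Delta x$ near $\pm\pi$ makes the right-hand side exceed $1$ in absolute value, forcing $\Omega$ to be complex, and the scheme is unstable.) The trivial mode $\Omega=0$, which in the cubic factorization $p=\omega(\omega^2-\xi^2)$ corresponds to $h_2=0$, gives no difficulty.

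The main obstacle is not deep: one must verify carefully that clearing the denominator $\cos(\mathcal{X}\Delta x/2)+\cos(\Omega\Delta t/2)$ in $h_2^2-h_1^2=0$ does not introduce or lose solutions on the domain of interest, and that the equation $\sin(\Omega\Delta t/2)=\pm\lambda\sin(\mathcal{X}\Delta x/2)$ captures all roots of $P$ up to the trivial $h_2=0$ branch. Both are routine using Lemma~\ref{lem:hdiffeo}'s style of analysis of $h$ on $(-\pi,\pi)^2$.
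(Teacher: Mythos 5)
Your proof follows the paper's argument essentially verbatim: specialize $p$ to $\omega^2=\xi^2$, reduce $P(\mathcal{X}\Delta x,\Omega\Delta t)=0$ via $h_1=\pm h_2$ to $\sin(\tfrac12\Omega\Delta t)=\pm\lambda\sin(\tfrac12\mathcal{X}\Delta x)$, and observe that for $\lambda\le 1$ this has a real root $\Omega\Delta t\in[-\pi,\pi]$ for every real wave number. The only (cosmetic) difference is that you phrase the stability criterion as ``all frequency roots are real'' while the paper verifies only the existence of a real root; for this dispersion relation the two coincide, since $\sin z=c$ with $|c|\le 1$ has only real solutions, so your version is if anything slightly more careful.
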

\begin{proof}
   It must be shown that for all $\mathcal{X} \Delta x \in
   \left[-\pi, \pi \right]$ there exists $\Omega \Delta t \in
   \left[-\pi,\pi \right]$ such that $P(\mathcal{X} \Delta x,\Omega \Delta t)=0$
   where, from theorem~\ref{thm:dispersionsds}, $P(x,y) = p(h(x,y))=
   p(h_1(x,y),h_2(x,y))$, and for the wave equation $p(\xi, \omega)=\xi^2 -
   \omega^2$.
   
   \begin{align*}
      P(\mathcal{X} \Delta x,\Omega \Delta t) &= 0, \\
      \Leftrightarrow p(h_1(\mathcal{X} \Delta x,\Omega \Delta t), h_2(\Delta x,\Omega \Delta t)) &= 0, \\
      \Leftrightarrow h_1(\mathcal{X} \Delta x,\Omega \Delta t) \pm h_2(\Delta x,\Omega \Delta t) &= 0, \\
      \Leftrightarrow \frac{\sin (\tfrac{1}{2} \Omega \Delta t)}{\sin (\tfrac{1}{2} \mathcal{X} \Delta x)} &= \frac{\Delta t}{\Delta x}, \\
      \Leftrightarrow \Omega \Delta t &= 2 \sin^{-1} \left( \lambda \sin (\tfrac{1}{2} \mathcal{X} \Delta x) \right),
   \end{align*}
   When $\lambda \le 1$ the right hand side can be evaluated for all $\mathcal{X} \Delta x \in
   \left[-\pi, \pi \right]$ and gives $\Omega \Delta t \in
   \left[-\pi,\pi \right]$.
\end{proof}

Another approach will now be illustrated.   The condition that for all $x \in \left[-\pi, \pi \right]$ there exists $y \in \left[-\pi,\pi \right]$ such that $p(h(x,y))=0$
is equivalent to stating that $h: \mathbb{R} \times \left[-\pi, \pi\right]$ contains the solution to $p(\xi,\omega)=0$.  By lemma~\eqref{lem:hdiffeo} $h$ is a diffeomorphism, thus the solution to $p(\xi,\omega)=0$ only 
has to be between the boundaries $h(x,\pm \pi)$.
$$
h(x,\pm \pi) = (h_1(x, \pm \pi), h_2(x, \pm \pi)) = \left(\frac{4 \sin(x/2)}{\Delta x \cos(x/2)}, \pm \frac{4}{\Delta t \cos(x/2)}\right).
$$ 
Let $\xi = h_1(x,y)$ and $\omega = h_2(x,y)$, and use the formula for $\cos \tan^{-1}$ to find
$$
   \omega = \pm \frac{4}{\Delta t} \sqrt{1 + \left(\frac{\Delta x \xi}{4}\right)^2}.
$$
Thus the simple diamond scheme is stable for the wave equation iff
$$
   \xi \le \pm \frac{4}{\Delta t} \sqrt{1 + \left(\frac{\Delta x \xi}{4}\right)^2}.
$$
It is straight forward to check this holds iff $\lambda \le 1$. 
Figure~\eqref{fig:linear_dispersion} illustrates 
the action of $h$ and the linear wave equation dispersion relation.
Figure~\eqref{fig:cubic_dispersion} is similar except for the dispersion relation
$p(\xi, \omega) = \omega - \xi + \xi^3$.
\begin{figure}
   \centering
   \begin{tikzpicture}

   \node[rectangle,minimum size=5.5cm] (top) {\pgfbox[center,center]{\pgfuseimage{dispersion_linear_range_0}}} ;
   \node[rectangle,minimum size=5.5cm] (mid) [below=0.3cm of top] {\pgfbox[center,center]{\pgfuseimage{dispersion_linear_range_1}}};
   \node[rectangle,minimum size=5.5cm] (bot) [below=0.3cm of mid] {\pgfbox[center,center]{\pgfuseimage{dispersion_linear_range_2}}}; 
   \node[rectangle,minimum size=5.5cm] (domain) [left=2cm of mid] {\pgfbox[center,center]{\pgfuseimage{dispersion_linear_domain}}} ;

   \draw[->,gray,ultra thick] (domain) to [out=0,in=180] node[black,above left] {$\frac{\Delta t}{\Delta x} = \frac{2}{1}$}  (top);
   \draw[->,gray,ultra thick] (domain)  to  node[black,above] {$\frac{\Delta t}{\Delta x} = \frac{1}{1}$} (mid);
   \draw[->,gray,ultra thick] (domain)  to [out=0,in=180]  node[black,below left] {$\frac{\Delta t}{\Delta x} = \frac{1}{2}$} (bot);

   \end{tikzpicture}
   \input{\inputdir/linear_dispersion_caption}
   \label{fig:linear_dispersion}
\end{figure}
\begin{figure}
   \centering
   \begin{tikzpicture}

   \node[rectangle,minimum size=5.5cm] (top) {\pgfbox[center,center]{\pgfuseimage{dispersion_cubic_range_0}}} ;
   \node[rectangle,minimum size=5.5cm] (mid) [below=0.3cm of top] {\pgfbox[center,center]{\pgfuseimage{dispersion_cubic_range_1}}};
   \node[rectangle,minimum size=5.5cm] (bot) [below=0.3cm of mid] {\pgfbox[center,center]{\pgfuseimage{dispersion_cubic_range_2}}}; 
   \node[rectangle,minimum size=5.5cm] (domain) [left=2cm of mid] {\pgfbox[center,center]{\pgfuseimage{dispersion_cubic_domain}}} ;

   \draw[->,gray,ultra thick] (domain) to [out=0,in=180] node[black,above left] {$\frac{\Delta t}{\Delta x} = \frac{2}{1}$}  (top);
   \draw[->,gray,ultra thick] (domain)  to  node[black,above] {$\frac{\Delta t}{\Delta x} = \frac{1}{1}$} (mid);
   \draw[->,gray,ultra thick] (domain)  to [out=0,in=180]  node[black,below left] {$\frac{\Delta t}{\Delta x} = \frac{0.025}{1}$} (bot);

   \end{tikzpicture}
   \input{\inputdir/cubic_dispersion_caption}
   \label{fig:cubic_dispersion}

\end{figure}

\section{Discussion}
Many features of the diamond scheme can be seen immediately from its definition:
\begin{enumerate}
\item It is defined for all multi-Hamiltonian systems~\eqref{eqn:hampde}.
\item It is only locally implicit within each diamond. Such locality is suitable for hyperbolic systems with finite wave speeds. Compared to fully implicit schemes like Runge--Kutta box schemes, this leads to
\begin{enumerate}
\item nonlinear equations that have a solution;
\item faster nonlinear solves;
\item better parallelization, as no communication is required during solves and all diamonds are be solved simultaneously--initial experiments indicate
that the scheme scales well with the number of processors; and
\item easier treatment of boundary conditions, which can be handled locally by finding a quadrilateral at the boundary on which the correct amount of information is known.
\end{enumerate}
On the other hand, the implicitness within a diamond should improve stability compared to fully explicit methods in cases where $S(z)$ contributes (a moderate amount of) stiffness to the equation.
\item It is linear in $z$; this is expected to lead to 
\begin{enumerate}
\item better preservation of conservation laws associated with linear symmetries;
\item  better transmission of waves at mesh boundaries; and
\item easier handing of dispersion relations. 
\end{enumerate}
It is the linearity of the method that means it can capture part of the continuous dispersion relation via a remapping of frequencies.
\end{enumerate}

This combination of properties, together with its expected and observed high order, is new for multisymplectic integrators. 

At the same time, the novel mesh introduces some complications:
\begin{enumerate}
\item The implementation is slightly more involved than on a standard mesh; in practice we have not found this to be significant. The parallel implementation is generally easier than on a standard mesh.
\item The interaction of the mesh with the boundaries means that they need special treatment (but at least they {\em can} be treated).
\item The mesh geometry introduces some distortions to the dispersion relation which, in practice, are intermediate in quality between those produced by Runge--Kutta box schemes and those produced by partitioned Runge--Kutta schemes. 
\end{enumerate}

The principle of the diamond method is extremely general and can be applied to a very wide range of PDEs; it and may have applications beyond the multi-symplectic PDE~\eqref{eqn:hampde}. It extends easily to $2d$-hedral meshes for PDEs in $d$-dimensional space-time, again subject to the CFL condition. However, at present to prove existence of solutions to the nonlinear equations we need to restrict to a particular class of equations; ideally one would like to establish existence of numerical solutions for all PDEs~\eqref{eqn:hampde} and relate them to the to the existence of solutions to the PDE itself.

In future work we shall address these issues and establish the order of the diamond method.

\section*{Acknowledgements}
This research was supported by the Marsden Fund of the Royal Society of New Zealand. We would like to thank Reinout Quispel for bringing the staircase method to our attention, and Stephen Marsland for useful discussions.

\bibliographystyle{siam}
\bibliography{\inputdir/diamond}

\end{document}